\theoremstyle{plain}
\newtheorem{thm}{Theorem}[section]
\theoremstyle{plain}
\newtheorem{prop}[thm]{Proposition}
\theoremstyle{plain}
\theoremstyle{plain}
\newtheorem{lemma}[thm]{Lemma}
\theoremstyle{plain}
\newtheorem{rem}[thm]{Remark}
\theoremstyle{plain}
\newtheorem{rems}[thm]{Remarks}
\theoremstyle{plain}
\theoremstyle{plain}
\theoremstyle{plain}
\newtheorem{definition}[thm]{Definition}
\theoremstyle{plain}
\newtheorem{exam}[thm]{Example}
\theoremstyle{plain}
\theoremstyle{plain}
\theoremstyle{plain}
\newtheorem{cor}[thm]{Corollary}
 \newcommand{\N}{{\mathbb N}}
\newcommand{\Z}{{\mathbb Z}} 
 \newcommand{\F}{{\mathbb F}}
 \newcommand{\He}{{\cal H}}
\begin{document}
\title{Generalised regularisation maps on partitions}

\author{Diego Millan Berdasco}
\affil{Queen Mary University of London\\ Mile End Road, \\ E1 4NS, UK\\ d.millanberdasco@qmul.ac.uk}
\date{}

\maketitle

\begin{abstract}
In a 1976 landmark paper, Gordon James defined the regularisation maps on integer partition, yielding certain decomposition numbers for modular representations of $\mathfrak{S}_n$. We describe a generalisation of James's regularisation map and give with proof an algorithm for such maps in the abacus.
\end{abstract}

\section{Introduction}
The Iwahori--Hecke algebras of the symmetric group $\He_{\F,q}(\mathfrak{S}_n)$ with quantum characteristic $e$ are examples of cellular algebras. Their cell modules are called Specht modules and are indexed by integer partitions $\lambda\vdash{n}$. All simple modules of $\He_{\F,q}(\mathfrak{S}_n)$ are isomorphic to certain quotients of Specht modules and indexed by $e$-regular integer partitions. Despite the nice properties derived from the cellular structure of the algebra, their representation theory is far from being well understood. The main open problem regarding Iwahori--Hecke algebras of type $A$ is the explicit description of the decomposition numbers $d_{\lambda\mu}$, which are the multiplicities of each simple module $D^{\mu}$ appearing as the quotients of the elements in the composition series of the Specht module $S^{\lambda}$. Knowing these composition series, or analogously the decomposition numbers, would give us a much better understanding of the structure of Specht modules, and thus of the whole algebra. For a outstanding account on Iwahori--Hecke algebras see [Ma].\newline
One of the earliest results in the literature regarding decomposition numbers was given by James in [J1]. He describes a combinatorial map on the set of partitions called $e$-regularisation, which for a given $\lambda$ returns the $e$-regular partition $\lambda^{reg_e}$ defining the smallest constituent in dominance order of $S^{\lambda}$, and it is found there that $[S^{\lambda}:D^{\lambda^{reg_e}}]=1$. The proof of this result is given for the representations of the symmetric group algebra only, but it is known to hold for Iwahori--Hecke algebras of type $A$. In 2019 Dimakis and Yue [DY] introduced the regularisation of a partition on two parameters $reg_{e,i}$, of which James's regularisation is a special case. We describe a stronger version here and generalise the algorithm given by Fayers in [F1] for the $e$-regularisation of a partition on the abacus. In the forthcoming paper [MB], the author shows that when the partition $\lambda^{reg_{e,i}}$ is $e$-regular, then $[S^{\lambda}:D^{\lambda^{reg_{e,i}}}]=1$. It is also worth noting the work developed in these notes has already been applied in [F2] to prove isomorphisms between crystals based on partitions, in turn isomorphic to the crystal $B(\Lambda_0)$ of the basic representation of $U_q(\mathfrak{\widehat{sl}}_e)$.
\vspace{2mm}

Most of the present notes, and all original results contained in it, are concerned with the combinatorics of the Young diagram and the abacus associated to an integer partition. We introduce these in section $2$. Next, we describe James's regularisation map, its generalisations on two parameters, and the interpretation of these maps in terms of decomposition numbers of Iwahori--Hecke algebras. Section $4$ contains, with proof, the algorithm on the $e$-abacus of a partition $\lambda$ yielding its generalised regularisation $\lambda^{reg_{e,i}}$ for each suitable $i$. We finalise this short paper giving structural results on $(e,i)$-regularisation classes in section 5.

\section{Background}
\subsection{Partitions and Young diagrams}
First, we fix some notation. We denote by $\N$ the set of all positive integer numbers $\{1,2,3,\ldots\}$ and let $n,e,i\in\N$, with $e\geq{2}$, $0<i<e$.

A \textit{partition} of an integer $n$ (written $\lambda\vdash{n}$) is  a weakly decreasing sequence $\lambda=(\lambda_1,\lambda_2,\ldots)$ of non-negative integers whose sum is $n=\abs{\lambda}=\lambda_1+\lambda_2+\cdots$. We usually group equal terms and omit zeros. For example, we write the partition $(3,3,3,1,1,0,0,\ldots)$ as $(3^3,1^2)$. A partition is \textit{$e$-regular} if it has no $e$ equal non-zero parts. That is, if there is no $j\in\N$ such that $\lambda_j=\cdots=\lambda_{j+e-1}>0$. Otherwise we say that $\lambda$ is \textit{$e$-singular}.

We usually represent partitions via their associated \textit{Young diagrams}. Given a partition $\lambda$, we define its Young diagram as the set
\begin{center}
$[\lambda]:=\{(a,b)\in\N^2:b\leq{\lambda_a}\}$.
\end{center}
A pair $(a,b)\in[\lambda]$ is called a \textit{node} of $\lambda$. We draw each node as a squared box and associate $(a,b)$ with its southeast corner. Given a certain Young diagram $[\lambda]$, its conjugate $[\lambda']$ is obtained by interchanging the rows and columns of $[\lambda]$. We denote by $\lambda':=(\lambda_1',\lambda_2',\ldots)$ the partition associated with $[\lambda']$.  We follow the English convention to represent Young diagrams (the lattice $\N^2$ has its $x$ axis pointing downwards and its $y$ axis increasing eastwards). Throughout these notes we abuse notation by denoting indistinctly the partition $\lambda$ and its associated Young diagram $[\lambda]$. The \textit{$e$-residue} of a node $(a,b)$ is the number $r\equiv b-a$ mod $e$.
\vspace{4mm}

\begin{exam} Let $\lambda=(8,5^2,4,2^3)\vdash{28}$. This partition is $e$-singular for $e=2,3$ and $e$-regular for $e\geq{4}$. The Young diagram of $\lambda$ is given below with the $3$-residue associated to each node:
\begin{center}
$\yngres(3,8,5^2,4,2^3)$
\end{center}
\end{exam}
\vspace{4mm}

Given a node $x=(c,d)\in\lambda$, we define the \textit{arm} of $x$, and write $arm_{c,d}$, as the set of nodes in $\lambda$ to the right of $x$ in the same row, and the \textit{leg} of $x$, $leg_{c,d}$, as the set of nodes in $\lambda$ lying below $x$ in the same column. The \textit{hook} of $(c,d)$ is the union of sets  $h_x=h_{c,d}=leg_{c,d}\,\cup \,arm_{c,d}\,\cup\, \{(c,d)\}$. The \textit{length} of either $h_{c,d}$, $leg_{c,d}$ or $arm_{c,d}$ is its cardinality as a set. The lowest node in $leg_{c,d}$ is called the \textit{foot} of $h_{c,d}$, and the rightmost node in $arm_{c,d}$ is called the \textit{hand} of $h_{c,d}$.\newline
The \textit{rim} of $[\lambda]$ is the set of nodes $(c,d)\in[\lambda]$ such that $(c+1,d+1)\not\in[\lambda]$. A \textit{skew-hook} of length $l$ is a set of $l$ consecutive nodes in the rim of $[\lambda]$ which can be removed such that the resulting diagram is the Young diagram of a partition. Lemma 18.4 in [J2] shows there is a one-to-one correspondence between the hooks and the skew-hooks of $[\lambda]$.\newline 

\subsection{The abacus}

Let $\lambda\vdash{n}$ and $s$ an integer greater than or equal to the number of non-zero entries of $\lambda$. For $1\leq{k}\leq{s}$ define $\beta_k:=\lambda_k+s-k$. The set $\{\beta_1,\ldots,\beta_s\}$ is called the $s$-\textit{beta-set} for $\lambda$.\newline
Given $e$, we define a $e$-\textit{abacus display} for $\lambda$ by taking $e$ vertical runners, numbered $0,\ldots,e-1$ from left to right, and labelling each position in runner $j$ with the integers $j,j+e,j+2e,\ldots$. For example, if $e=3$ we have the numbering:
\begin{center}
\abacus(e{0}e{ }e{1}e{ }e{2}e{ },ldmdr,{ne{0}}ne{1}ne{2},ne{3}ne{4}ne{5},ve{ }ve{ }v)
\end{center}
Now, We place a bead at position $\beta_k$ for each $k$. We say that a position in the abacus is \textit{occupied} if there is a bead on it. Otherwise we say that the position is \textit{empty}.
\vspace{4mm}

\begin{exam} Let $\lambda=(9,7,6^2,4,3^2,2)$ with $e=5$ and $s=10$. Then $(\beta_1,\ldots,\beta_{10})=(18,15,13,\\12,9,7,6,4,1,0)$ and the $5$-abacus display associated to $\lambda$ is
\begin{center}
\abacus(lmmmr,bbnnb,nbbnb,nnbbn,bnnbn,nnnnn)
\end{center}
\end{exam}

\begin{rem}
Note that the nodes corresponding to positions in the same runner have the same $e$-residue. If a node $(a,b)$ in the rim of $[\lambda]$ is associated to position $t$ in an abacus display for $\lambda$, then $t+e$ corresponds to the node $(a-k,b+(e-k))$ where $k$ is the number of beads between $t$ and $t+e$. Thus, $b+e-k-(a-k)=b-a+e\equiv b-a$ mod $e$.
\label{abres}
\end{rem}
\vspace{4mm}

We may abuse notation again and denote $\lambda$ indistinctly to allude either to the partition, its Young diagram, or an abacus display associated with the partition when it is clear which one of the three we are referring to.
\vspace{8mm}

\section{Regularisation maps}
\subsection{The $(e,i)$-regularisation of a partition}

The content of this section is, for the most part, non-standard. We define the $(e,i)$-regularisation of a partition, obtain James's regularisation as a particular instance, and deduce some properties of $(e,i)$-regular partitions.

For given natural numbers $l$ and $r<e$ with $ir+l\equiv 1$ $mod$ $e$, we define the \textit{$(l,r)$-th $(e,i)$-ladder} to be the subset of $\N^2$ defined as:
\begin{align}
\mathcal{L}_l^r=\{(a,b)\in\N\times\N  \,|\, l=(e-i)b+ai+1-e,\,  r\equiv b-a \,\, mod\,\, e\}
\label{laddesc}
\end{align} 
Note that if $ir+l\not\equiv 1$ $mod$ $e$, then no pair of natural numbers satifies the equations in (\ref{laddesc}), and thus $\mathcal{L}_l^r$ is empty.\newline
It is clear that for any $(a,b)\in\N^2$, there is a unique pair $(l,r)$ with $(a,b)\in\mathcal{L}_l^r$. We may refer to it simply as the $l$-th ladder if there is no doubt which $e$, $i$ and $r$ we are referring to. The $l$-th ladder of a partition $\lambda$ is the set $[\lambda]\cap\mathcal{L}_l^r$. In order to differentiate the nodes of $\mathcal{L}_l^r$ in $\lambda$ from those not in $\lambda$, we keep the name ``nodes" for those in $[\lambda]\cap\mathcal{L}_l^r$  and call the elements in $\mathcal{L}_l^r$ \textit{rungs}. We write $\mathcal{L}(a,b)$ to denote the ladder of the node $(a,b)$ in $\lambda$.
\vspace{4mm}

\begin{exam} Let $(e,i)=(6,4)$. We draw below the first $(6,4)$-ladders in a $\N^2$ box lattice indicating row and column, and labelling each box with the corresponding $(6,4)$-ladder:

\begin{center}
\newcommand\eleven{11}
\Yboxdim{30pt}
\gyoung(::1:2:3:4:5:6:7:8:9^1\hdts,:1<1,0><3,1><5,2><7,3><9,4><\eleven,5><13,0><15,1><17,2>^1\hdts,:2<5,5><7,0><9,1><\eleven,2><13,3><15,4><17,5><19,0><21,1>^1\hdts,:3<9,4><\eleven,5><13,0><15,1><17,2>^1\hdts,:4<13,3><15,4><17,5><19,0><21,1>^1\hdts,:5<17,2>^1\hdts,^1\vdts^1\vdts)
\end{center}
\end{exam}
\vspace{4mm}

\begin{lemma} Let $e,i\in\N$ with $1\leq i\leq e-1$ and $(a,b)\in\mathcal{L}_l^r$. Then $\mathcal{L}_l^r=\{(a-k(e-i),b+ki)\,\mid\,k\in\Z\}$.
\end{lemma}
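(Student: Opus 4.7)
The plan is to prove the equality by double inclusion. The forward inclusion will be a one-line substitution; the reverse inclusion reduces to a small linear Diophantine problem that is governed by the residue condition.

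First I would verify $\{(a-k(e-i), b+ki) \mid k \in \Z\} \subseteq \mathcal{L}_l^r$ by plugging $(a',b') = (a-k(e-i), b+ki)$ into the two defining conditions of (\ref{laddesc}). The $k$-terms in $(e-i)b' + a'i$ cancel, so $(e-i)b' + a'i + 1 - e = (e-i)b + ai + 1 - e = l$; and $b' - a' = (b-a) + k\bigl((e-i)+i\bigr) = (b-a) + ke \equiv r \pmod e$, so the residue condition is preserved.

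For the reverse inclusion, I would take any $(a',b') \in \mathcal{L}_l^r$, set $m = a-a'$ and $n = b'-b$, and subtract the two instances of $l = (e-i)\cdot\, \square + \square\cdot i + 1 - e$ to obtain $(e-i)n = im$. Since both pairs satisfy $b-a \equiv r \equiv b'-a' \pmod e$, one also gets $m+n \equiv 0 \pmod e$. Writing $m+n = ek$ with $k \in \Z$ and combining with $en = i(m+n)$ (a rearrangement of $(e-i)n = im$) yields $n = ki$ and $m = k(e-i)$, putting $(a',b')$ in the desired form.

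The only genuine subtlety, and thus the main obstacle to watch, sits in the reverse inclusion: the equation $(e-i)n = im$ on its own does not force $(m,n)$ to be an integer multiple of $(e-i, i)$ when $\gcd(i,e-i) > 1$; it is the residue condition $m+n \equiv 0 \pmod e$ that pins $k$ down to $\Z$. Everything else is routine. A minor convention point worth flagging is that the right-hand set as written ranges over all $k \in \Z$ and may formally include coordinates outside $\N^2$; the intended reading is that the equality holds as subsets of $\N^2$, the out-of-range values of $k$ simply producing no rung.
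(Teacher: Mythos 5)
Your proof is correct and takes essentially the same route as the paper's: a direct substitution for the forward inclusion, and for the reverse inclusion a subtraction of the two defining linear equations combined with the residue condition to force the divisibility. The only difference is cosmetic --- the paper splits the reverse direction into the cases $\gcd(e,i)=1$ (where the linear equation alone gives $i\mid h$) and $\gcd(e,i)>1$ (where the $e$-residue is invoked), whereas your observation that $m+n\equiv 0 \pmod{e}$ handles both uniformly and is, if anything, tidier.
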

\begin{proof} It is straightforward to check that every node of the form $(a-k(e-i),b+ki)$ for $k\in\Z$ lies in the same $(l,r)$-th $(e,i)$-ladder. That is, $\{(a-k(e-i),b+ki)\,\mid\,k\in\Z\}\subset\mathcal{L}_l^r$.

Assume $(a,b)$ and $(c,d)$ lie in the $(l,r)$-th $(e,i)$-ladder. We have then the equations:
\[
l=(e-i)b+ai+1-e,\; \qquad l=(e-i)d+ci+1-e.
\]
Subtracting the two equations we obtain $(e-i)b+ia=(e-i)d+ic$.  Take $h\in\Z$ such that $d=b+h$. Substituting in the identity we obtain $(c,d)=(a-\frac{(e-i)h}{i},b+h)$. Now, if $gcd(e,i)=1$ this implies that $i\mid h$, so there is a $k\in\Z$ such that $h=ki$, and we obtain $(c,d)=(a-k(e-i),b+ki)$ as we claimed. If $gcd(e,i)>1$, recall the residue $r\equiv b-a$ mod $ e$, so we have $r+me=b-a$ for some $m\in\Z$. Since $r$ is also the $e$-residue of $(c,d)$, we have $r+m'e=d-c=b-a+\frac{eh}{i}$ for $m'\in\Z$. But $\frac{eh}{i}$ must be a multiple of $e$, that is $\frac{eh}{i}=ke$, and so $h=ki$ for $k\in\Z$ and we obtain again $(c,d)=(a-k(e-i),b+ki)$.
\end{proof}

\begin{rems} 
Note that in the proof above we used the $e$-residue $r$ only for the case in which $e$ and $i$ are not coprime. When $gcd(e,i)=1$, the first of the equations in (\ref{laddesc}) determines the $e$-residue. We can then ``drop" $r$ and write
\[
\mathcal{L}_l:=\{(a,b)\in\N\times\N  \,|\, l=(e-i)b+ai+1-e\}=\{(a-k(e-i),b+ki)\,\mid\,k\in\Z\}.
\]
From Remark~\ref{abres}, if we wish to show two nodes $x,y$ in the rim of $[\lambda]$ corresponding to positions lying in the same runner of an abacus display for $\lambda$ are in the $(l,r)$-th ladder, it is enough to check that $x$ and $y$ define the same $l$. We will use this fact in the proof of Proposition~\ref{main}.
\end{rems}
\vspace{4mm}

\begin{lemma} If $(a,b),(c,d)\in\N^2$ lie in the same $(ke,ki)$-ladder, then they lie in the same $(e,i)$-ladder.
\label{samelad}
\end{lemma}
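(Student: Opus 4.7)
The plan is to unpack what it means to lie in the same $(ke,ki)$-ladder using the defining equations in (\ref{laddesc}), and then show that both defining conditions of an $(e,i)$-ladder drop out essentially by dividing by $k$ (or, for the residue condition, by reducing the modulus).

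First I would write out the hypothesis: there exist $l',r'$ with $(a,b),(c,d)\in\mathcal{L}_{l'}^{r'}$ relative to the parameters $(ke,ki)$. Setting the two instances of the ``$l$-equation'' equal gives
\[
(ke-ki)b+a(ki)+1-ke=(ke-ki)d+c(ki)+1-ke,
\]
and after cancellation and dividing through by $k$ one obtains $(e-i)b+ai=(e-i)d+ci$. Adding $1-e$ to both sides shows that $(a,b)$ and $(c,d)$ define the same value $l:=(e-i)b+ai+1-e$ relative to parameters $(e,i)$. For the residue condition, $b-a\equiv d-c\pmod{ke}$ trivially implies $b-a\equiv d-c\pmod{e}$, so the two points have a common value $r\equiv b-a\pmod e$.

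Finally I would invoke the remark following the definition of $(e,i)$-ladders, namely that every pair in $\N^2$ lies in exactly one $(e,i)$-ladder, to conclude that the common pair $(l,r)$ automatically satisfies $ir+l\equiv 1\pmod e$ and that both $(a,b)$ and $(c,d)$ belong to $\mathcal{L}_l^r$. There is no real obstacle here: the content of the lemma is just that the equation and congruence defining an $(ke,ki)$-ladder refine those defining an $(e,i)$-ladder, so this amounts to a short unpacking of definitions.
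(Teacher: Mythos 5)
Your proof is correct and follows essentially the same route as the paper: equate the two instances of the defining equation for the $(ke,ki)$-ladder, divide by $k$ to recover the $(e,i)$-equation, and observe that congruence mod $ke$ implies congruence mod $e$. The only (harmless) extra step is your explicit appeal to the uniqueness of the ladder containing a given node to handle the compatibility condition $ir+l\equiv 1\pmod e$, which the paper leaves implicit.
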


\begin{proof} For any two nodes $(a,b),(c,d)\in\N^2$ lying in the same $(ke,ki)$-ladder, we obtain the identity $(ke-ki)b+kia=(ke-ki)d+kic$ which clearly implies $(e-i)b+ia=(e-i)d+ic$. Similarly, $(a,b)$ and $(c,d)$ must have the same $e$-residue, since $b-a\equiv d-c$ $mod$ $ke$ implies $b-a\equiv d-c$ $mod$ $e$. Therefore $(a,b)$ and $(c,d)$ lie in the same $(e,i)$-ladder.
\end{proof}
\vspace{4mm}

\begin{exam} Let $\lambda=(5,3,1^4)$. This partition is $e$-singular for $e=2,3,4$ and $e$-regular for $e\geq{5}$. For $e=3$ and $i=1,2$ respectively, we represent in each box of its Young diagram the ladder associated to each node:
\begin{center}
\begin{tabular}{cccccc}
$(e,i)=(3,1)$& & & &$(e,i)=(3,2)$&\\
&\young(13579,246,3,4,5,6)& & & & \young(12345,345,5,7,9,<11>)\\
\end{tabular}
\end{center}
\end{exam}
\vspace{4mm}




Now we can define the $(e,i)$-regularisation of a partition:

\begin{definition}[\textbf{$(e,i)$-regularisation}] Let $\lambda\vdash{n}$. Then its $(e,i)$-regularisation is the partition $\lambda^{reg_{e,i}}\vdash{n}$ whose Young diagram is built from that of $\lambda$ by moving all nodes upwards in their $(e,i)$-ladders as high as possible subject to the resulting configuration being the Young diagram of a partition. If no node can be moved, then $\lambda^{reg_{e,i}}:=\lambda$.\newline
\end{definition}

It is not clear from the definition above that $(e,i)$-regularisations are well defined, which will be the matter of sections $4$ and $5$. It is worth noting that this definition of the $(e,i)$-regularisation is a stronger version of the one given by Dimakis and Yue in [DY], who consider also those $[\lambda^{reg_{e,i}}]$ not defining an integer partition.
\vspace{4mm}

\begin{definition}[\textbf{$(e,i)$-regular partitions}]
We say that a partition $\mu\vdash{n}$ is \textit{$(e,i)$-regular} when $\mu^{reg_{e,i}}=\mu$. Otherwise we say $\mu$ is \textit{$(e,i)$-singular}. 
\end{definition}

From the two previous definitions it is clear that $\lambda,\mu\vdash{n}$ have the same $(e,i)$-regularisation if and only if $[\lambda]\cap\mathcal{L}_l^r=[\mu]\cap\mathcal{L}_l^r$ for all $l$, and $0\leq r\leq e-1$. In this case, we say that $\mu\vdash{n}$ belongs to the \textit{$(e,i)$-regularisation class of $\lambda$}, denoted $\mu\in\mathfrak{R}_{(e,i)}(\lambda)$. It is easy to check that the relation between two partitions of the same integer defined by having the same $(e,i)$-regularisation is indeed an equivalence relation. As we prove in Section $5$, we can take $(e,i)$-regular partitions as representatives of their $(e,i)$-regularisation class.

\begin{rems}
The regularisation map defined by James in [J1] is our $(e,1)$-regularisation. Thus, a partition is $e$-regular if and only if it is $(e,1)$-regular.

Note that Lemma~\ref{samelad} implies that if $\mu\in\mathfrak{R}_{(ke,ki)}(\lambda)$, then $\mu\in\mathfrak{R}_{(e,i)}(\lambda)$.
\end{rems}

\vspace{4mm}

Next, we see examples of the $(e,i)$-regularisation process:

\begin{exam} Let $\lambda=(5,4^2,1^3)\vdash{16}$ and $e=4$. This partition is $4$-regular so $\lambda^{reg_{4,1}}=\lambda$. For $i=2,3$ we have $\lambda^{reg_{4,2}}=(6,5,3,1^2)$ and $\lambda^{reg_{4,3}}=(10,2,1^4)$. We can see this from the Young diagrams below, where the nodes moved in the $(4,i)$-regularisation are labelled by different letters according to the $(4,i)$-ladder they belong to. Note that $\lambda^{reg_{4,3}}$ is $4$-singular despite $\lambda$ being $4$-regular. 
\vspace{4mm}

The case $(e,i)=(4,2)$
\begin{center}
\begin{tikzpicture}[scale=1]
\tgyoung(0cm,0cm,;;;;;:::;;;;;;<A>,;;;;::::;;;;;<B>,;;;;<A>::::;;;,;:::::::;,;:::::::;,;<B>)
\end{tikzpicture}
\end{center}

The case $(e,i)=(4,3)$
\begin{center}
\begin{tikzpicture}[scale=1]
\tgyoung(0cm,0cm,;;;;;:::;;;;;;<C>;<D>;<E>;<F>;<G>,;;;<C>;<D>::::;;,;;<E>;<F>;<G>::::;,;:::::::;,;:::::::;,;:::::::;)
\end{tikzpicture}
\end{center}

\end{exam}
\vspace{4mm}

Recall the definition of the hook $h_x$ for a node $x\in\lambda$ in Section $2$. The following definition will be useful later on to characterise $(e,i)$-regular partitions:

\begin{definition}[\textbf{$(e,i)$-hooks.}] Let $\lambda\vdash{n}$ and $x\in\lambda$ such that $e$ divides $|h_x|$ and $\frac{|h_x|}{|leg_x|}=\frac{e}{e-i}$. We call such hooks $h_x$ $(e,i)$-hooks.
\end{definition}
\vspace{4mm}

\begin{lemma} Let $\mu,\lambda\vdash{n}$ with $\lambda\in\mathfrak{R}_{(e,i)}(\mu)$ and $\lambda\neq\mu$. Then either $\mu$ or $\lambda$ has a $(e,i)$-hook.
\label{uniqueness}
\end{lemma}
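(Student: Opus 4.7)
My plan is to exhibit an explicit $(e,i)$-hook in one of the two partitions; without loss of generality (by swapping $\lambda$ and $\mu$) I aim to produce the hook inside $\lambda$. The approach is to convert the discrepancy between $[\lambda]$ and $[\mu]$ in a suitable ladder into a hook of the right dimensions, reading off arm and leg from an extremal pair of mismatched nodes.

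Since $[\lambda]\neq[\mu]$ but every ladder meets them in equally many rungs, I would first choose an extremal node $(c,d)\in[\lambda]\setminus[\mu]$: take $c$ maximal, then among those take $d$ minimal. The maximality immediately gives $\lambda'_d=c$, because if $(c+1,d)\in[\lambda]$ then $\mu_{c+1}\leq\mu_c<d$ would place $(c+1,d)$ in $[\lambda]\setminus[\mu]$ at a larger row. In the ladder $\mathcal{L}$ containing $(c,d)$ the equal-cardinality hypothesis produces some rung in $[\mu]\setminus[\lambda]$; I would then let $(a,b)=(c-k(e-i),d+ki)$ be the lowest such rung above $(c,d)$, so that $k\geq 1$ is minimal, which ensures that none of the intermediate rungs for $1\leq j\leq k-1$ lies in $[\mu]\setminus[\lambda]$.

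The candidate hook is the one with corner $(a,d)$ in $\lambda$. It lies in $[\lambda]$ since $\lambda_a\geq\lambda_c\geq d$. Its leg length equals $\lambda'_d-a=c-a=k(e-i)$ by the identity $\lambda'_d=c$ established above, and its arm length equals $\lambda_a-d$. The total length is therefore $\lambda_a-d+k(e-i)+1$, and the ratio of leg to total equals $(e-i)/e$ exactly when $\lambda_a-d=ki-1$, i.e.\ when $\lambda_a=b-1$. In that case the hook has length $ke$ and is an $(e,i)$-hook in the sense of the definition.

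The principal obstacle is to prove the identity $\lambda_a=b-1$. The upper bound $\lambda_a\leq b-1$ is immediate from $(a,b)\notin[\lambda]$, so the substance is the reverse inequality. I would argue by contradiction: if $\lambda_a\leq b-2$, then $\mu_a\geq b$ puts $(a,b-1)\in[\mu]\setminus[\lambda]$, lying in a different ladder $\mathcal{L}^*$ with parameter $l^*=l-(e-i)$. Equal cardinality in $\mathcal{L}^*$ then produces some $(c^*,d^*)\in[\lambda]\setminus[\mu]\cap\mathcal{L}^*$; a short computation shows that the only rung of $\mathcal{L}^*$ at row $c$ is $(c,d-1)$, and the tie-breaker in the choice of $(c,d)$ excludes $c^*=c$ (otherwise $(c,d-1)$ would be a $[\lambda]\setminus[\mu]$-discrepancy at row $c$ with smaller column than $d$). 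Hence $c^*<c$, and by iterating the argument one constructs a strictly decreasing chain of rows supporting $[\lambda]\setminus[\mu]$-discrepancies, which must terminate since the partitions are finite, delivering the desired contradiction. Should this iterative step prove too delicate to carry through cleanly, the fall-back is to re-run the symmetric argument with the roles of $\lambda$ and $\mu$ reversed and locate the $(e,i)$-hook inside $\mu$ instead.
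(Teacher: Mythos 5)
Your choice of extremal node is where the argument breaks. You pick $(c,d)\in[\lambda]\setminus[\mu]$ with $c$ maximal and then $d$ minimal, and assert that the equal ladder counts produce a rung of $[\mu]\setminus[\lambda]$ in the ladder of $(c,d)$ lying \emph{above} row $c$. That is not forced: the balancing rung can lie below $(c,d)$, in which case there is no $(a,b)$ to work with. Concretely, take $(e,i)=(3,1)$, $\lambda=(2,2)$, $\mu=(1^4)$; these lie in the same $(3,1)$-regularisation class (each has exactly one node in each of the ladders $1,2,3,4$). Your recipe gives $(c,d)=(2,2)$, whose ladder is $\{(2,2),(4,1)\}$, and its only $[\mu]\setminus[\lambda]$ rung is $(4,1)$, \emph{below} $(c,d)$. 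Worse, $\lambda=(2,2)$ has no $(3,1)$-hook at all (its only hook of length divisible by $3$ is $h_{1,1}$, whose leg has length $1$ rather than the required $2$); the hook lives in $\mu=(1^4)$, at $(2,1)$. So no argument that insists on locating the hook in the partition containing the bottom-most $[\lambda]\setminus[\mu]$ discrepancy can succeed: your opening ``without loss of generality'' is not a genuine WLOG, because which partition carries the hook is dictated by the combinatorics rather than by a relabelling, and your fall-back of ``re-run with the roles reversed'' is an unproved hope --- you would still need to show that at least one of the two orientations goes through.

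The paper sidesteps this by taking the minimal \emph{column} $b$ with $\lambda'_b\neq\mu'_b$ and working inside the partition (say $\mu$) with the longer column there, with $(a,b)$ the bottom node of that column. Since all columns $<b$ of $\lambda$ and $\mu$ coincide, every rung of the ladder of $(a,b)$ below row $a$ sits in a column $<b$ and so cannot be a discrepancy; the balancing node is therefore forced above row $a$. The hand of the hook is then found not by proving an exact equality like your $\lambda_a=b-1$ (which you only sketch, via an iteration you do not carry out), but by comparing the counts of the ladders of $(a,b)$ and $(a,b-1)$ above row $a$: this yields a $k>0$ with $(a-k(e-i),b+ki-1)\in\mu$ and $(a-k(e-i),b+ki)\notin\mu$, so that node is automatically the last in its row and the hook with foot $(a,b)$ and hand $(a-k(e-i),b+ki-1)$ has length $ke$ and leg length $k(e-i)$. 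If you replace ``maximal row'' by ``minimal disagreeing column'' and locate the hand by this counting argument, your outline essentially becomes the paper's proof.
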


\begin{proof}
Let $b\in\N$ be minimum such that $\lambda_b'\neq\mu_b'$ and lets assume without loss of generality that $\lambda_b'<\mu_b'$. We will show that $\mu$ has an $(e,i)$-hook. Let $(a,b)\in\mu$ be the node at the bottom of $\mu_b'$. Since $\lambda\in\mathfrak{R}_{(e,i)}(\mu)$ and $(a,b)\not\in\lambda$, there needs to be a node in $\lambda$ in the $(e,i)$-ladder of $(a,b)$ above row $a$ which is not in $\mu$. For a given node $x\in\lambda$ ,let $\mathcal{L}_{<a}^{\lambda}(x)$ denote the subset of nodes of $\lambda$ in the $(e,i)$-ladder of $x$ above row $a$. Then, because every node in the ladder of $(a,b)$ has a node to its left in the ladder of $(a,b-1)$, we have $|\mathcal{L}_{<a}^{\lambda}((a,b-1))|\geq|\mathcal{L}_{<a}^{\lambda}((a,b))|$. Therefore, $|\mathcal{L}_{<a}^{\mu}((a,b-1))|>|\mathcal{L}_{<a}^{\mu}((a,b))|$. Thus, there exist a node in the ladder of $(a,b-1)$ in $\mu$, namely $(a-k(e-i),b+ki-1)$ for a certain $k>0$ such that $(a-k(e-i),b+ki)\not\in\mu$. But in that case we have a $(e,i)$-hook in $\mu$ with foot $(a,b)$ and hand $(a-k(e-i),b+ki-1)$.

\end{proof}

Next, we give a characterisation of the $(e,i)$-hooks in a partition $\lambda$ via its abacus display.

\begin{lemma} A partition $\lambda$ contains an $(e,i)$-hook if and only if there is a bead $b$ in the abacus display for $\lambda$ such that there is an empty position $b-ke$, $k\in\N$ in the runner of $b$ with $k(p-i)$ beads  between $b$ and $b-ke$ (excluding $b$) reading horizontally from $b-ke$ to $b$.
\label{abacus1}
\end{lemma}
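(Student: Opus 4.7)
The plan is to reduce the statement to the classical correspondence between hooks of $\lambda$ and pairs (bead, empty position) in the abacus display, and then translate the two defining conditions of an $(e,i)$-hook into abacus language.

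First I would record the hook-abacus dictionary in the precise form needed: for any bead at position $b$ and any empty position $q < b$ in the abacus display of $\lambda$, there is a unique hook $h_x$ of $\lambda$ with $|h_x| = b - q$ and $|leg_x|$ equal to the number of occupied positions strictly between $q$ and $b$; moreover, this assignment is a bijection between the hooks of $\lambda$ and all such (bead, empty) pairs. This is a standard consequence of the one-to-one correspondence between hooks and skew-hooks cited from Lemma 18.4 in [J2], together with the observation that sliding the bead at $b$ down to the empty slot $q$ realises the removal of the skew-hook associated with $h_x$, and that the beads jumped over correspond exactly to the rows contributing to the leg of $h_x$.

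With this dictionary in hand, the lemma becomes a direct translation. An $(e,i)$-hook is by definition a hook $h_x$ satisfying $|h_x| = ke$ and $|leg_x| = k(e-i)$ for some $k \in \N$. The equation $b - q = ke$ is equivalent to setting $q = b - ke$ together with the requirement that $b$ and $q$ lie on the same runner of the $e$-abacus; these are exactly the positional hypotheses in the lemma. The leg-length condition then translates into the statement that precisely $k(e-i)$ of the intermediate positions $b-ke+1, b-ke+2, \ldots, b-1$ are occupied, which is the remaining hypothesis. Running the bijection in both directions yields the claimed equivalence.

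The only delicate point is getting the dictionary right, in particular making sure that the leg length equals the number of intermediate beads and is not off by one. I would confirm this on a small abacus example and, if helpful to include in the paper, justify it by a short induction that peels off one skew-hook at a time via a single bead move. Beyond that, the proof is essentially bookkeeping, and I do not expect any serious obstacle.
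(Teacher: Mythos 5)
Your proposal is correct and follows essentially the same route as the paper: both reduce the statement to the classical hook--abacus dictionary from [J2] (hooks of $\lambda$ correspond bijectively to pairs consisting of a bead $b$ and an empty position $q<b$, with $|h_x|=b-q$ and $|leg_x|$ equal to the number of beads strictly between $q$ and $b$) and then translate the two defining conditions of an $(e,i)$-hook. Your write-up is if anything slightly cleaner in the forward direction, where the paper re-derives the emptiness of position $b-ke$ by comparing rows $c+k(e-i)$ and $c+k(e-i)+1$ rather than reading it off the bijection; note also that the $k(p-i)$ in the statement is a typo for $k(e-i)$, which you have silently corrected.
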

\begin{proof} First, assume $\lambda\vdash{n}$ contains an $(e,i)$-hook and take an abacus display for $\lambda$ with $e$ runners and $r$ beads. By the definition of $(e,i)$-hook, there is a node $(c,d)\in\lambda$ such that the hook associated to it, $h_{c,d}$ satisfies $\frac{|h_{c,d}|}{|leg_{c,d}|}=\frac{e}{e-i}$. This means there is a $k\in\N$  with $k(e-i)$ being the leg length and $ki-1$ being the arm length of $h_{c,d}$. Consider the bead $b$ in the abacus display of $\lambda$ associated to row $c$. The correspondence between beads and rows, and empty positions and columns, tell us that there will be  $k(e-i)$ beads (and therefore $ki$ empty spaces) in the next $ke$ positions to the left of the bead $b$. Since $|leg_{c,d}|=k(e-i)$, in row $c+k(e-i)+1$ there must be at least one node less than in row $c+k(e-i)$. This means there will be an empty position at $b-ke$ in the same runner as $b$.

Now assume we have a bead $b$ in the abacus display for $\lambda$ as in the formulation. Let $b$ be the $(r-c+1)$-th bead in such abacus display. Using again the correspondence between beads and rows and empty positions and columns, together with the one between hooks and skew hooks in the Young diagram of a partition (see p. 73 in [J2]), it is straightforward that the hook in $\lambda$ associated with the section of the abacus between $b$ and $b-ke$ will be such that $|h_{c,d}|=ke$ and $|leg_{c,d}|=k(e-i)$.
\end{proof}

We can define a total order on partitions of $n$ by letting $\lambda>\mu$ if for some $j$ we have $\lambda_j>\mu_j$ and $\lambda_k=\mu_k$ for all $k<j$. This is called the \textit{lexicographic order}. We define a partial order on partitions we will need next:

\begin{definition}[\textbf{Dominance order}] Let $\lambda,\mu\vdash{n}$. We say that $\lambda$ is more dominant than $\mu$, and write $\lambda\unrhd\mu$, if for all $j\geq 1$ we have $\sum_{k=1}^j\lambda_k\geq \sum_{k=1}^j\mu_k$. We write $\lambda\triangleright\mu$ to indicate $\lambda\unrhd\mu$, $\lambda\neq\mu$.
\end{definition}
\vspace{4mm}

We end this section with the following result on the $(e,i)$-regularisation classes:

\begin{lemma} Let $\lambda\vdash{n}$ and $\mu\in\mathfrak{R}_{(e,i)}(\lambda)$ built from $\lambda$ by moving some nodes up in their ladders. Then $\mu\triangleright\lambda$.
\label{dom}
\end{lemma}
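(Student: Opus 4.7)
The plan is to interpret each partial sum $\sum_{k=1}^{j} \lambda_k$ as the number of nodes of $[\lambda]$ lying in rows $1, \ldots, j$, and then to exploit the natural bijection $\phi\colon[\lambda]\to[\mu]$ sending each node of $[\lambda]$ to the position it occupies in $[\mu]$ after the upward movement. The key observation, using the explicit description $\mathcal{L}_l^r=\{(a-k(e-i),b+ki)\mid k\in\Z\}$ established in the previous lemma, is that travelling upwards along an $(e,i)$-ladder corresponds to taking $k>0$, which strictly decreases the row index. Hence for every $(a,b)\in[\lambda]$, the image $\phi(a,b)=(a',b')$ satisfies $a'\leq a$, with equality precisely when that node was left in place.

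First I would verify that $\phi$ is a well-defined bijection: by hypothesis $[\mu]$ is again a Young diagram of the same size $n$ and each node of $[\lambda]$ is reassigned to a rung of its own ladder, so distinct nodes of $[\lambda]$ land on distinct rungs and $\phi$ surjects onto $[\mu]$ by construction. Writing $T_j^\nu=\{(a,b)\in[\nu]\mid a\leq j\}$ for $\nu\in\{\lambda,\mu\}$, so that $|T_j^\nu|=\sum_{k=1}^{j}\nu_k$, the inequality $a'\leq a$ yields $\phi(T_j^\lambda)\subseteq T_j^\mu$, and injectivity of $\phi$ gives
\begin{equation*}
\sum_{k=1}^{j} \lambda_k = |T_j^\lambda| \leq |T_j^\mu| = \sum_{k=1}^{j} \mu_k,
\end{equation*}
which proves $\mu\unrhd\lambda$.

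For strictness, since at least one node of $[\lambda]$ was genuinely moved up, there is some $(a,b)\in[\lambda]$ with $\phi(a,b)=(a',b')$ and $a'<a$. Taking $j=a'$, this particular image is counted by $T_j^\mu$ but the node $(a,b)$ itself is not counted by $T_j^\lambda$, whereas every other $x\in T_j^\lambda$ still satisfies $\phi(x)\in T_j^\mu$ by the same argument; injectivity of $\phi$ then forces $|T_j^\lambda|<|T_j^\mu|$ at this particular $j$, giving $\mu\triangleright\lambda$. I expect the only real subtlety to be the formal verification that $\phi$ is a bijection, which is immediate once one accepts the interpretation of ``moving up'' as reassigning each node to another rung of its own ladder while the collective outcome remains a Young diagram of size $n$; everything else is an immediate consequence of the shape of the $(e,i)$-ladders.
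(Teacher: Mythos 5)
Your proof is correct, and it takes a somewhat different (and in fact tighter) route than the paper's. The paper argues at the level of rows: it introduces the set of row indices where $\mu_k>\lambda_k$ (rows gaining nodes) and the set where $\lambda_m>\mu_m$ (rows losing nodes), observes that every gaining row has some losing row strictly below it, and then asserts dominance from this together with $|\lambda|=|\mu|$. You instead work at the level of individual nodes, packaging the movement as an injection $\phi\colon[\lambda]\to[\mu]$ that never increases the row index, and reading each partial sum $\sum_{k\le j}\nu_k$ as the cardinality of the truncation $T_j^\nu$; the inclusion $\phi(T_j^\lambda)\subseteq T_j^\mu$ then gives every dominance inequality at once, and a single strictly moved node gives strictness at $j=a'$. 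Both arguments rest on the same underlying fact -- that $e-i>0$ forces upward ladder moves to strictly decrease the row index -- but your bijection makes explicit the counting step that the paper's row-level statement leaves implicit (the implication ``each gaining row lies above some losing row, hence dominance'' is not by itself a complete deduction without some matching or flow argument). The one point worth stating a little more carefully is the injectivity of $\phi$: it follows because the final configuration is required to be a set of $n$ rungs equal to $[\mu]$, so no two nodes can be sent to the same rung; you note this, and nothing else in the argument needs attention.
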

\begin{proof} Let $\mathfrak{b}_{\mu}(\lambda)$ denote the set of integers $k$ such that $\mu_k>\lambda_k$, and $\mathfrak{b}_{\lambda}(\mu)$ the set of integers $m$ such that $\lambda_m>\mu_m$. From the construction we have that $\mathfrak{b}_{\mu}(\lambda)$ defines the rows of $\lambda$ to which nodes are added to form $\mu$, and $\mathfrak{b}_{\lambda}(\mu)$ the rows of $\lambda$ where nodes are moved up in their runners. Therefore, for each $k\in\mathfrak{b}_{\mu}(\lambda)$ there is at least one $m\in\mathfrak{b}_{\lambda}(\mu)$ such that $k<m$. This, together with the fact that both $\lambda$ and $\mu$ have the same number of nodes, guarantees $\mu\triangleright\lambda$.
\end{proof}
\vspace{6mm}

\subsection{$(e,i)$-regularisations and Hecke algebras of type $A$}

The $(e,i)$-regularisation is defined in the context of the representation theory of the symmetric group $\mathfrak{S}_n$, where the partitions of $n$ determine the irreducible modules in characteristic zero $S^{\lambda}$, called \textit{Specht modules}. Their restrictions to fields of positive characteristic $p$ are not irreducible, but all irreducible modules $D^{\mu}$ can be realised as heads of particular quotients of Specht modules for $\mu$ $p$-regular. The decomposition numbers $d_{\lambda\mu}=[S^{\lambda}:D^{\mu}]\geq{0}$ are the multiplicities of the irreducible modules $D^{\mu}$ as composition factors of each Specht module $S^{\lambda}$. These concepts extend to the Iwahori--Hecke algebras $\He_{\F,q}(\mathfrak{S}_n)$, where the role played by the prime $p$ in representations of $\mathfrak{S}_n$ is played here by the ``quantum characteristic" $e\in\N$. The $(e,1)$-regularisation agrees with the regularisation map defined by James in [J1], and always returns an $e$-regular partition.  In that landmark paper, James showed the following result for symmetric groups (so substituting $e$ by $p=char(\mathbb{F})$). Since it is known to hold for Iwahori--Hecke algebras, we will write it here directly in terms of $e$:

\begin{thm}[\textbf{J1, Theorem A}] Let $\lambda\vdash{n}$. Then, $[S^{\lambda}:D^{\lambda^{reg_{e,1}}}]=1$.
\end{thm}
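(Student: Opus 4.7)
Set $\mu := \lambda^{reg_{e,1}}$. The plan is to prove three things in sequence: first, that $\mu$ is $e$-regular, so that $D^\mu$ is defined; second, that $d_{\lambda,\mu} \geq 1$; and third, that $d_{\lambda,\mu} \leq 1$. The first I would argue by contradiction: an $e$-singular $\mu$ would contain $e$ equal non-zero rows which, on the $e$-abacus display, translate into an $(e,1)$-hook by Lemma~\ref{abacus1}. By the algorithm developed in section $4$, this would then permit a further ladder move, violating the maximality in the definition of the regularisation.

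For the second and third points simultaneously, the natural approach is induction on the number $k$ of single ``node-up-one-rung'' ladder moves needed to pass from $\lambda$ to $\mu$. The base case $k=0$ reduces to $\lambda = \mu$, where $d_{\mu,\mu} = 1$ by the general theory of cellular algebras. For the inductive step, I would appeal to Lemma~\ref{uniqueness} to locate an $(e,1)$-hook in $\lambda$ whose foot can be shifted one rung up its $e$-ladder, producing a partition $\lambda'$ with $\lambda' \triangleright \lambda$ (by Lemma~\ref{dom}) and requiring strictly fewer ladder moves to reach $\mu$. The crux is then the claim $d_{\lambda,\mu} = d_{\lambda',\mu}$; iterating this invariance down the chain gives $d_{\lambda,\mu} = d_{\mu,\mu} = 1$, settling both the appearance of $D^\mu$ as a composition factor of $S^\lambda$ and its multiplicity one.

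The hard part is precisely this invariance $d_{\lambda,\mu} = d_{\lambda',\mu}$ under a single ladder move. Two natural tools present themselves. The Jantzen--Schaper sum formula expresses alternating sums of decomposition numbers in terms of hook-length contributions, and the relevant contributions to the $\mu$-coordinate arising from $\lambda \to \lambda'$ should cancel in pairs, since the move corresponds on the abacus to a local transposition of a bead with an adjacent empty position whose $e$-residues match consistently. Alternatively, one can work in the Fock space realisation of the basic representation of $U_q(\widehat{\mathfrak{sl}}_e)$, where the canonical basis element $G(\mu)$ has, at $q=1$, coefficients equal to the decomposition numbers $d_{\nu,\mu}$ for $\nu \unrhd \mu$; a ladder move $\lambda \to \lambda'$ corresponds to the action of a Kashiwara operator $\widetilde{e}_r$ or $\widetilde{f}_r$, which preserves the $\mu$-coefficient by the defining crystal relations. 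Given the abacus-centric spirit of these notes, the Fock-space route appears the conceptually cleaner option, while the Jantzen--Schaper route is more elementary but combinatorially heavier.
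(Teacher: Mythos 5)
The paper does not actually prove this statement: it is quoted from James [J1] as a known result, and the surrounding text only explains that the combinatorics developed here are aimed at a generalisation of it in [MB]. So your proposal has to stand on its own, and it does not. Your first step is fine in outline: $\mu=\lambda^{reg_{e,1}}$ is $(e,1)$-regular by construction, and the equivalence of $(e,1)$-regularity with $e$-regularity is exactly what the paper records (via Proposition~\ref{charreg} and Lemma~\ref{abacus1}), though your one-line abacus argument needs the maximal block of equal parts to actually produce the empty position $b-e$.

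The genuine gap is the step you yourself call the crux: the invariance $d_{\lambda,\mu}=d_{\lambda',\mu}$ under a single ladder move. As stated, this claim is essentially equivalent to the theorem --- both sides equal $1$ precisely \emph{because} James's theorem holds for $\lambda$ and $\lambda'$ --- so reducing to it is only progress if you can prove it independently, and neither proposed tool does. The Jantzen--Schaper formula controls $\sum_{j>0}[S^{\lambda}_j:D^{\mu}]$ in terms of other decomposition numbers $d_{\nu\mu}$ with $\nu\rhd\lambda$; there is no pairing of hook contributions that isolates $d_{\lambda\mu}-d_{\lambda'\mu}$, and the formula is well known not to determine individual decomposition numbers outside special (small-weight) situations. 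The Fock-space route is worse: a ladder move fixes $n$, so it cannot be the action of a Kashiwara operator $\widetilde{e}_r$ or $\widetilde{f}_r$ (these change $n$ by one); crystal operators live at $q=0$ and say nothing about coefficients of the canonical basis $G(\mu)$ at $q=1$; and Ariki's theorem identifies those coefficients with decomposition numbers only in characteristic zero, whereas the theorem is asserted for $\He_{\F,q}(\mathfrak{S}_n)$ over an arbitrary field. There are also smaller unexamined points --- that one can always pass from $\lambda$ to $\mu$ through genuine partitions by single-node moves (the paper's algorithm $\phi$ moves a whole rim $e$-hook at a time, not one node), and that the foot of an arbitrary $(e,1)$-hook need not be a removable node --- but the unproved invariance is what sinks the argument. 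James's actual proof runs along entirely different lines, via induction using restriction to $\mathfrak{S}_{n-1}$ and the dominance bound on composition factors of Specht modules, rather than by tracking a multiplicity along a chain of ladder moves.
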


The motivation of this paper is to develop the combinatorial tools to prove the following generalisation of James's result above, which is the focus of [MB]:

\begin{thm} Let $\lambda\vdash{n}$. For all $0<i<e-1$ we have that if $\lambda^{reg_{i,e}}$ is $e$-regular, then $[S^{\lambda}:D^{\lambda^{reg_{e,i}}}]=1$.
\end{thm}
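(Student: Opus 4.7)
The plan is to adapt James's $i=1$ argument to the general $(e,i)$ setting, using the abacus algorithm of Section~4 to decompose the regularisation into a chain of elementary ladder moves and inducting along them. The natural induction is on the dominance-order distance from $\lambda$ to $\lambda^{reg_{e,i}}$ (equivalently, on the number of $(e,i)$-hooks of $\lambda$, as characterised by Lemma~\ref{abacus1}). The base case $\lambda=\lambda^{reg_{e,i}}$ is immediate: $\lambda$ is $(e,i)$-regular and by hypothesis $e$-regular, so $[S^\lambda:D^\lambda]=1$ follows from the cellular structure of $\He_{\F,q}(\mathfrak{S}_n)$.

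For the inductive step, first I would locate an $(e,i)$-hook of $\lambda$ via Lemma~\ref{uniqueness} and translate it into the abacus picture using Lemma~\ref{abacus1}. Performing a single step of the Section~4 algorithm yields a partition $\mu\in\mathfrak{R}_{(e,i)}(\lambda)$ with $\mu\triangleright\lambda$ (by Lemma~\ref{dom}) and $\mu^{reg_{e,i}}=\lambda^{reg_{e,i}}$. By induction, $[S^\mu:D^{\lambda^{reg_{e,i}}}]=1$, so the theorem reduces to the identity
\[
[S^\lambda:D^{\lambda^{reg_{e,i}}}]=[S^\mu:D^{\lambda^{reg_{e,i}}}].
\]
The lower bound ($\geq$) should come from a Carter--Payne (or Lyle--Fayers) type non-zero Specht homomorphism between $S^\mu$ and $S^\lambda$: the abacus form of the move describes a skew-hook exchange whose residue data should verify the hypotheses of such a theorem. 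The upper bound ($\leq$) I would attack via the Jantzen--Schaper sum formula, or equivalently the LLT algorithm computing the canonical basis of the level-one Fock space of $U_q(\mathfrak{\widehat{sl}}_e)$.

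The main obstacle is the upper bound. For $i=1$ each elementary move exchanges beads between two adjacent runners and the Jantzen--Schaper terms collapse cleanly; for general $i$ the move acts across non-adjacent runners, producing many more potential contributions. Showing they collapse to give multiplicity exactly one will require the fine abacus analysis of Section~4 together with the canonical class representatives from Section~5, and I would expect the crystal perspective of [F2] to be a useful auxiliary: $\lambda^{reg_{e,i}}$ is the canonical representative of its regularisation class in the crystal $B(\Lambda_0)$, and the multiplicity-one statement then corresponds to a monomial coefficient in the associated canonical basis vector.
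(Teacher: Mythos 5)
You should know first that this paper does not prove the theorem at all: it is stated explicitly as the motivation for the combinatorics developed here, with the representation-theoretic proof deferred to the forthcoming paper [MB]. So there is no proof in the paper to compare yours against; what the paper supplies is exactly the ingredients you invoke (the abacus algorithm $\phi$, Lemma~\ref{dom}, Corollary~\ref{onlylocked}), and nothing more.

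Judged on its own terms, your proposal has a genuine gap, and it is the whole theorem. Your induction correctly reduces everything to the identity $[S^\lambda:D^{\lambda^{reg_{e,i}}}]=[S^\mu:D^{\lambda^{reg_{e,i}}}]$ for a single step $\mu=\phi(\lambda)$ of the algorithm, but you then leave both inequalities unestablished. For the lower bound, a nonzero Carter--Payne (or Fayers--Lyle) homomorphism between $S^\mu$ and $S^\lambda$ does not by itself give $[S^\lambda:D^\nu]\geq[S^\mu:D^\nu]$ for a prescribed $\nu$: a nonzero map only guarantees that the head of its image occurs in the codomain, and $\mu$ need not be $e$-regular, so you cannot even identify that head; moreover you have not checked that the bead moves of $\phi$ (which for $i>1$ involve several beads across non-adjacent runners, cf.\ Lemma~\ref{phipsi}) satisfy the one-node-per-ladder hypotheses of any such homomorphism theorem. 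For the upper bound you name Jantzen--Schaper and LLT but concede you do not know how to make the contributions collapse; since James's own proof for $i=1$ proceeds instead by a direct analysis of polytabloids rather than by either of these tools, there is no reason to expect the $i=1$ collapse to be the right model. A further small point: your proposed induction ``on the number of $(e,i)$-hooks'' is not known to be a valid induction parameter, since nothing in Section~4 shows that $\phi$ decreases the number of $(e,i)$-hooks; induct instead on dominance or lexicographic order, as Proposition~\ref{main} does. In short, the combinatorial scaffolding you describe matches what the paper provides, but the representation-theoretic content that would turn it into a proof is absent, as you yourself acknowledge.
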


We will not go further in the representation theory here, and refer the interested reader to the classical books by James [J2] on representations of $\mathfrak{S}_n$, and by Mathas [Ma] on Iwahori--Hecke algebras. 
\vspace{8mm}

\section{An algorithm for the $(e,i)$-regularisation on the abacus}

The purpose of this section is to prove the following result, which will allow us to show that the $(e,i)$-regularisation is well-defined, while at the same time giving an algorithm for constructing the $(e,i)$-regularisation of a partition.

\begin{prop} If $\lambda$ has an $(e,i)$-hook, then there exists $\mu\in\mathfrak{R}_{(e,i)}(\lambda)$ with $\mu\unrhd\lambda$.
\label{moredom}
\end{prop}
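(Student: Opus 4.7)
The approach is to work directly on the abacus display of $\lambda$, using Lemma~\ref{abacus1}'s characterisation: the $(e,i)$-hook furnishes a bead $\mathfrak{b}$ at some position $p$ with $p - ke$ empty and exactly $k(e-i)$ beads in the intermediate range $(p-ke,p)$. I would construct $\mu$ by performing a composite within-runner move: slide $\mathfrak{b}$ from $p$ to $p - ke$ (that is, upward by $k$ positions on its own runner), and simultaneously slide a carefully chosen collection of intermediate beads downward on their own runners, with the total downward shift summing to exactly $ke$. The simplest natural prescription is to slide down the ``lowest free'' intermediate bead on each of $k$ appropriately chosen runners by $e$ each; for $k=1$ this collapses to a single compensating move.

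Once $\mu$ is defined by this composite move, the four verifications proceed as follows. First, $|\mu| = |\lambda|$ is immediate, since the bead-position sum changes by $-ke + ke = 0$. Second, $\mu$ is a genuine partition provided the chosen compensating moves avoid collisions — this is the step where the choice of which intermediate beads to slide has to be made carefully. Third, to show $\mu \in \mathfrak{R}_{(e,i)}(\lambda)$, I would verify that the multiset of ladder labels $(l,r)$ attached to the nodes of $\mu$ equals that of $\lambda$: because every bead stays on its original runner, the residues $r$ are unchanged, and the structural pairing between the removed leg-nodes $(c+j(e-i),d)$ and the added arm-extension nodes $(c, d + ji)$ (which have identical $l$-values, as a direct calculation shows) ensures each $(l,r)$-ladder gains and loses exactly one node. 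Fourth, $\mu \triangleright \lambda$ follows directly from Lemma~\ref{dom}, because the composite move realises some nodes of $\lambda$ being pushed upward in their $(e,i)$-ladders.

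The hardest step is the second: specifying the compensating moves so that, across all cases, no two beads collide. The difficulty is genuinely combinatorial, because the distribution of the $k(e-i)$ intermediate beads among the $e-1$ non-$\mathfrak{b}$-runners can be arbitrary, and on a given runner one sometimes needs to push the ``lowest'' intermediate bead down (rather than a higher one) to avoid running into other beads already sitting below. My plan would be to argue by induction on $k$: the base case $k=1$ amounts to identifying a single intermediate bead whose position-plus-$e$ is empty, which exists by a counting argument on bead/empty distributions within $[p-e, p]$; for $k > 1$, I would isolate a sub-hook of width $e$ inside the $(e,i)$-hook, apply the $k=1$ construction to move one bead up one runner-step and one down, and then invoke the inductive hypothesis on the resulting configuration, which still contains an $(e,i)$-hook of multiplicity $k-1$ with respect to the updated bead $\mathfrak{b}$.
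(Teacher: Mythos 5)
Your framework---abacus moves consisting of one bead sliding up its runner with compensating beads sliding down, followed by a check that ladder multisets are preserved---is the right one, but there is a genuine gap at the step you label third, and it is precisely where all the work lies. Moving a bead up by $e$ on its runner removes a \emph{rim hook} of $e$ consecutive rim nodes, and the multiset of $(e,i)$-ladder labels of those nodes is governed by the pattern of beads and spaces in the $e-1$ positions below the bead (this is the content of Lemma~\ref{ladders} and Remark~\ref{rema}); it is \emph{not} the column segment $\{(c+j(e-i),d)\}$ of the hook's leg. Likewise, moving a bead down by $e$ adds a rim hook whose ladder labels depend on the local bead/space pattern around the landing position. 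Your pairing of ``removed leg-nodes'' with ``added arm-extension nodes'' is a correct computation about two sets that do lie in matching ladders, but neither set is what your bead moves actually remove or add, and the set $\{(c,d+ji)\}$ is not even a valid increment of a Young diagram, since its columns are spaced $i$ apart. So the assertion that each $(l,r)$-ladder gains and loses exactly one node is unsubstantiated, and it fails for an arbitrary choice of compensating bead: one must show that the added rim hook reproduces, up to the cyclic-shift phenomenon described in Remark~\ref{rema}, exactly the ladder sequence of the removed one.

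Relatedly, your claim that the case $k=1$ ``collapses to a single compensating move'' is not correct: even when the hook has length exactly $e$, a single up-move paired with a single down-move generally lands outside $\mathfrak{R}_{(e,i)}(\lambda)$, because no single landing position need reproduce the ladder multiset of the removed rim hook. The construction in the paper instead performs a cascade of $c$ up-moves (at beads $b_1<\cdots<b_c$, which lie on the runners of the empty positions $s_1,\ldots,s_i$ and are \emph{below} the original bead, not between it and its target) and $c$ down-moves (to positions $t_1<\cdots<t_c$), with $c$ chosen minimal so that $t_c<b_{c+1}$; the worked example with $(e,i)=(7,4)$ has $c=4$ for a hook of length $e$. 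Proving that this cascade preserves ladder content is exactly Proposition~\ref{main}, carried out by induction through the intermediate partition $\xi\vdash n-e$ obtained by removing the first rim hook. Finally, your induction on $k$ is harder to make precise than necessary: after one ``$k=1$ step'' it is not clear the intermediate configuration retains an $(e,i)$-hook of the required length, whereas taking $k$ maximal, running the whole argument for $(ke,ki)$-ladders, and invoking Lemma~\ref{samelad} disposes of $k>1$ in one line.
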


\begin{proof} Let $k$ be maximal such that $\lambda$ has an $(e,i)$-hook at $x$ with $|h_x|=ke$ and $|leg_x|=k(e-i)$ and assume there exists $\mu\in\mathfrak{R}_{(e,i)}(\lambda)$ with $\mu\unrhd\lambda$.  If $k>1$, replacing $(e,i)$ with $(ke,ki)$ we could find a partition $\mu$ in the $(ke,ki)$-regularisation class of $\lambda$ which is more dominant, but by Lemma~\ref{samelad} this implies that $\mu\in\mathfrak{R}_{(e,i)(\lambda)}$. Thus, we can safely assume $k=1$ for the rest of the proof.

In order to facilitate its digestion, we have divided the proof in the remaining statements given in the present section, namely Lemma~\ref{phipsi}, Lemma~\ref{ladders} and Proposition~\ref{main}. Given $\lambda$ containing a $(e,i)$-hook, our algorithm will yield a partition $\phi(\lambda)\in\mathfrak{R}_{(e,i)}(\lambda)$ with $\phi(\lambda)\triangleright\lambda$.
\end{proof}


Let $\lambda\vdash{n}$ and assume we are in the situation of Lemma~\ref{abacus1}, that is, there is 
an empty position $s_1$ such that $b_1:=s_1+e$ has a bead on it and with exactly $i-1$ other empty positions $s_1<s_2<\cdots<s_i<b_1$ in runners labelled $x_1,\ldots,x_i$, respectively. Lets take $s_1$ maximal with that property. Numerate the beads after $b_1$ in the runners $x_j$, $j=1\ldots,i$ consecutively $b_2<\cdots<b_l$.\newline
It is important to stress that the subscript of the beads is not related with the runner it lies in, so that $b_{3}$ might be in runner $x_1$, for example. However, $s_m$ lies in $x_m$ for $m\leq{i}$.

Let $t_1<t_2<\cdots$ be the empty positions after $s_i$ not on the runners $x_j$. Let $c\in\{1,\ldots,l\}$ be minimal such that $t_c<b_{c+1}$. If no such condition is ever satisfied, then set $c=l$.\newline
Now we construct a new abacus, with associated partition $\phi(\lambda)$, by moving a bead a position up its runner from $b_k$ and a bead one place down the runner to $t_k$ for $k=1\ldots,c$. For $i>1$, it may be the case that not all $b_k-e$ are empty, nor all $t_k-e$ are occupied, but from the construction of $\phi(\lambda)$ if $b_k-e$ has a bead on it then $b_k-e=b_k'$ with $k'<k$, and so $b_k'$ will be moved to $b_k'-e=b_k-2e$, and this process must end since $b_k$ is in a runner $x_j$ and $s_j$ is empty in that runner. The case for the positions $t_k$ is similar. 

\begin{center}
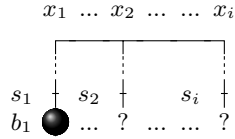

\abacus(e{ }e{x_1}e{...}e{x_2}e{...}e{...}e{x_i},e{ }ldmddr,e{ }ve{ }ve{ }e{ }v,e{s_1}ne{s_2}ne{ }e{s_i}n,e{b_1}be{...}e{?}e{...}e{...}e{?})
\captionof{figure}{abacus for $\lambda$}
\end{center}

Since $t_c>b_c$, it is clear that $\phi(\lambda)>\lambda$ in the lexicographic order of partitions. By virtue of Lemma~\ref{dom} we only need to show that $\phi(\lambda)\in\mathfrak{R}_{(e,i)}(\lambda)$ in order to prove our algorithm works.

\begin{lemma} Let $\lambda\vdash{n}$ with $c$ as above. Then $c$ is also minimal with $t_{c+1}<b_{c+1}$.
\label{phipsi}
\end{lemma}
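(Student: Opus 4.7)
The lemma's statement has two aspects: the inequality $t_{c+1}<b_{c+1}$, and the assertion that $c$ is minimal among indices satisfying it. The minimality part is immediate from the setup: by the minimality of $c$ in the defining condition $t_c<b_{c+1}$, we have $t_k\geq b_{k+1}$ for every $k<c$, and since $(t_m)$ is strictly increasing, $t_{k+1}>t_k\geq b_{k+1}$; in particular $t_{k+1}>b_{k+1}$, so no $k<c$ can witness $t_{k+1}<b_{k+1}$. The content of the lemma is therefore the single inequality $t_{c+1}<b_{c+1}$.

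I would argue this by contradiction, supposing $t_{c+1}\geq b_{c+1}$. Under this hypothesis, the empty positions off the runners $x_1,\ldots,x_i$ lying in the open interval $(s_i,b_{c+1})$ are exactly $t_1,\ldots,t_c$, so there are precisely $c$ of them. Combined with the $c$ beads $b_1,\ldots,b_c$ on the $x_j$-runners in this interval, and the observation (from the setup) that $(s_i,b_1)$ is comprised entirely of beads on non-$x_j$-runners, this pins down a rigid balance among the length of $(s_i,b_{c+1})$, the count of empty $x_j$-runner positions between successive $b_k$'s, and the count of beads off the $x_j$-runners.

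The contradiction should come from the maximality of $s_1$ applied to the position $p:=b_{c+1}-e$, which lies on the same $x_j$-runner as $b_{c+1}$. If $p$ is empty, the inequality $p>s_1$ (which follows from $b_{c+1}>b_1=s_1+e$) lets one invoke maximality to forbid the number of empties in the length-$(e-1)$ window $(p,b_{c+1})$ from equalling $i-1$; the window contains exactly $i-1$ positions on the other $x_j$-runners and $e-i$ positions off the $x_j$-runners, and the rigid bookkeeping under the standing hypothesis should force its empty count to be precisely $i-1$, giving the contradiction. If $p$ is occupied, it must coincide with some earlier $b_m$ with $m\leq c$, and one cascades down the runner by iteratively replacing $b_{c+1}$ by $p$ and shifting by $e$, reaching an empty position on this runner in finitely many steps (since $s_{j_0}<b_1$ is empty there), and applying the preceding argument at that empty position.

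The main obstacle is the counting step forcing the total empty count in $(p,b_{c+1})$ to exactly $i-1$; one must track how many $t_k$'s fall inside the window against how many of the $i-1$ $x_j$-runner positions there are themselves occupied by some $b_m$ (which can happen precisely when the gap $b_{c+1}-b_m$ is less than $e$), and then use the hypothesis $t_{c+1}\geq b_{c+1}$ to equate these two numbers. The cascading case is a secondary technical issue but reduces to the first via the same template once an empty position on the runner is reached.
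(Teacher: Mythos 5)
Your reduction of the statement to the single inequality $t_{c+1}<b_{c+1}$, and your proof of the minimality half, match the paper exactly. The paper also argues the remaining inequality by contradiction from $t_c<b_{c+1}<t_{c+1}$ via the maximality of $s_1$, so the strategy is right. But the counting step you flag as "the main obstacle" is a genuine gap, not a technicality: the window $(b_{c+1}-e,\,b_{c+1})$ need \emph{not} contain exactly $i-1$ empty positions, and the hypothesis $t_{c+1}\geq b_{c+1}$ does not equate the number of $t_k$'s in that window with the number of occupied $x_j$-runner positions there. Concretely, take $(e,i)=(4,2)$ and the abacus with beads at positions $1,3,4,5,6,11,12$ (so $\lambda=(6^2,2^4,1)$, $s_1=0$, $s_2=2$, $x_1,x_2$ the runners $0,2$). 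Here $s_1=0$ is maximal among empty positions heading a size-one $(4,2)$-hook, $b_2=6$, $b_3=12$, $t_1=7$, $t_2=9$, $t_3=13$, so $c=2$ and $t_2<b_3<t_3$; yet $p=b_3-e=8$ is empty and the window $\{9,10,11\}$ contains two empties, not one. Your argument terminates here with no contradiction, because there is in fact no larger empty position heading a size-one hook.

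What rules this configuration out is an ingredient your proof never invokes: the reduction at the start of Proposition~\ref{moredom} to the case where $k=1$ is the \emph{maximal} hook size (the example above has a $k=3$ hook: position $0$ empty, $12$ occupied, six beads between, and the lemma's conclusion genuinely fails for it if one ignores that reduction). Accordingly, the paper does not count over the top length-$e$ window but over the full window: writing $b_{c+1}=s_1+ue+v$, the position $s_1+v$ is one of the $s_j$ and hence guaranteed empty (no cascading needed), and a bookkeeping over all of $(s_1+v,\,b_{c+1})$ — using that the $t_m$'s in this range are exactly $t_1,\ldots,t_c$ and the $x$-runner beads are $b_1,\ldots,b_c$ — shows it contains exactly $u(e-i)$ beads. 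This yields a size-$u$ $(e,i)$-hook at $s_1+v$, contradicting the maximality of $s_1$ when $u=1$ (or $v>0$) and the maximality of $k$ when $u>1$. To repair your proof you would need to replace the per-window count by this global count and add the appeal to the $k$-maximality; as written, the argument cannot close.
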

\begin{proof}
Consider $s_1$ the maximal position in the abacus display of $\lambda$ giving rise to a $(e,i)$-hook in $[\lambda]$. Let $c$ be minimal such that $t_c<b_{c+1}$ and let $d$ be minimal such that $t_{d+1}<b_{d+1}$. We show $c=d$. Observe that $d\geq{c}$, since $b_{c+1}>t_c>t_{c-1}>b_c$ and $d<c$ would contradict the choice of $c$. Lets assume then $d>c$. In particular $l\geq{c+1}$ and $t_c<b_{c+1}<t_{c+1}$. We will show that this last inequality leads to a contradiction in the maximality of $s_1$. Write $b_{c+1}=s_1+ue+v$, $0\leq{v}<e$ lying in runner $x_j$. Position $s_1+v$, also in $x_j$ must lie empty. Now, there are $c$ beads between $b_1$ and $b_{c+1}$ in runners $x_v$, $v\leq{i}$. To count the number of beads in the other runners between $s_1+v$ and $b_{c+1}$, observe there are $(e-i)-v-1+j$ beads between $s_1+v$ and $b_1$, and $(e-i)(u-1)+v-j+1-c$ beads between $b_1$ and $b_{c+1}$ after subtracting all empty positions $t_1,\ldots,t_c$ lying in those runners. Therefore we have in total $u(e-i)$ beads between $s_1+v=b_{c+1}-ue$ and $b_{c+1}$. Since $s_1<s_1+v$, we have a contradiction with the choice of $s_1$. Thus, necessarily we have $t_c<t_{c+1}<b_{c+1}$, and therefore $c=d$.
\end{proof}
\vspace{4mm}

\begin{exam} Let $(e,i)=(7,4)$ and $\lambda=(14,13,5,4^5,3,1^2)$ with abacus display
\begin{center}
\abacus(lmmmmmr,nbbnnbn,bbbbbnb,nnnnnnn,nbnbnnn,nnnnnnn)
\end{center}
where $r=11$, $(b_1,b_2,b_3,b_4,b_5)=(7,10,11,13,24)$ and $(t_l)=(12,15,16,19,23...)$, so $c=4$ in the definition of $\phi(\lambda)$ with abacus display
\begin{center}
\abacus(lmmmmmr,bbbbbnb,nnnnnnn,nbbnnbn,nbnbnnn,nnnnnnn)
\end{center}
representing the partition $(14,13,11,9^2,1)$.
\end{exam}
\vspace{4mm}


\begin{lemma} Let $\lambda\vdash{n}$ and consider an abacus display for $\lambda$ with $r$ beads. Suppose that $\xi$ is obtained from $\lambda$ by moving a bead from position $s_1+e$ to $s_1$. Define $s_j$, $j\leq{i}$ as above and let $k_j$ be the number of beads between $s_j$ and $s_{j+1}$. Then $k_j=s_{j+1}-s_j-1$ for $j\neq{i}$ and $k_i=s_1+e-s_i-1$. We can express then $s_j$ in terms of $s_1$ and the $k_l$'s with $l<j$ as $s_j=s_1+j-1+\sum_{l=1}^{j-1}k_l$. \newline
We would like to find the ladders to which the nodes lying in $\lambda\setminus\xi$ belong. Note that no two of the $e$ nodes in $\lambda\setminus\xi$ has the same residue. Let $m$ be the number of empty spaces before position $s_1$ in either display. From equation (\ref{laddesc}) we have
$$
l_{s_1}=em+(r-s_1-1)i+1
$$
is the number determining the $(e,i)$-ladder of the node defined by $s_1$. Consider $j\in\{1,\ldots,e-1\}$ and let $l_{s_j}$ be the ladder associated to position $s_j$ in the abacus. It is possible to obtain the $j$-th $(e,i)$-ladder from the $(j-1)$-th $(e,i)$-ladder by adding $e-i$ if position $s_1+j$ is empty and subtracting $i$ otherwise. That is, the ladders will be precisely those numbered 
\begin{align}
&l_{s_1},\quad l_{s_1}-i,\ldots,l_{s_1}-k_1i,\notag\\ 
&l_{s_2},\quad l_{s_2}-i,\ldots,l_{s_2}-k_2i, \notag\\
&\vdots \label{setlad}\\
&l_{s_i},\quad l_{s_i}-i,\ldots,l_{s_i}-k_ii.\notag
\end{align}
where we can see $l_{s_j}=e(m+j-1)+(r-s_j-1)i+1$ for $j\leq{i}$. Note that $l_{s_i}-k_ii=e(m-1)+(r-s_1)i+1$.
\label{ladders}
\end{lemma}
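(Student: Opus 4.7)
My plan is to split the lemma into three parts: the combinatorics of the positions $s_j$, identification of the node ``defined by $s_1$'' via a rim walk, and an induction that pins down the remaining ladder indices.

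For the first part, since $s_1<s_2<\cdots<s_i<b_1=s_1+e$ are by construction the only empties in $[s_1,b_1]$ and $b_1$ is itself a bead, every position between consecutive $s_j,s_{j+1}$ (and between $s_i$ and $b_1$) is a bead. This immediately gives $k_j=s_{j+1}-s_j-1$ for $j<i$ and $k_i=s_1+e-s_i-1$, and telescoping $s_{j+1}=s_j+k_j+1$ yields $s_j=s_1+(j-1)+\sum_{l=1}^{j-1}k_l$. For the distinct-residue claim I will use that $\lambda\setminus\xi$ is an $e$-rim hook: walking along it from hand to foot, every step is either one place down or one place left, and each such step decreases the $e$-residue by $1$, so the $e$ residues exhaust $\Z/e\Z$.

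For the second part, I invoke the standard bijection between the rim of $[\lambda]$ (traversed from its bottom-left to top-right) and the abacus, in which each empty is a right-step and each bead an up-step. Since moving the bead from $b_1$ to $s_1$ removes the $e$-rim hook $\lambda\setminus\xi$, the abacus positions $s_1,s_1+1,\ldots,b_1-1$ correspond bijectively and in order to the $e$ nodes of that rim hook. If after position $p$ the walk has taken $R$ right-steps and $U$ up-steps in total, it sits at the node $(r-U,R)$; at $p=s_1$ the counts are $R=m+1$ and $U=s_1-m$, which identifies the foot of the rim hook as $(r-s_1+m,\,m+1)$. Substituting into~(\ref{laddesc}) and simplifying yields $l_{s_1}=em+(r-s_1-1)i+1$.

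For the third part, advancing from position $p$ to $p+1$ along the abacus either increments the column of the associated node (if $p+1$ is empty) or decrements its row (if $p+1$ is a bead); through~(\ref{laddesc}) these translate into ladder shifts of $+(e-i)$ or $-i$ respectively. Walking from $s_j$ to $s_{j+1}$ we cross $k_j$ consecutive beads followed by the empty $s_{j+1}$, so the ladders met in that stretch are exactly $l_{s_j},\,l_{s_j}-i,\,\ldots,\,l_{s_j}-k_j i$; a short induction on $j$ then gives $l_{s_j}=e(m+j-1)+(r-s_j-1)i+1$. The final identity $l_{s_i}-k_ii=e(m-1)+(r-s_1)i+1$ drops out of substituting $k_i=s_1+e-s_i-1$ into this expression.

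The main obstacle is the second part: making the rim walk / abacus dictionary precise enough to fix the coordinates of the node ``defined by $s_1$''. Once the foot of the rim hook is identified as $(r-s_1+m,\,m+1)$, the rest of the lemma reduces to bookkeeping and routine substitutions into~(\ref{laddesc}).
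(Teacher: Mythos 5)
Your proposal is correct and follows essentially the same route as the paper: you identify the node corresponding to $s_1$ as lying in row $r+m-s_1$ and column $m+1$ via the bead/empty counts, substitute into~(\ref{laddesc}) to get $l_{s_1}$, and then propagate along the abacus using the $+(e-i)$ (empty) and $-i$ (bead) ladder increments. The only cosmetic difference is at the very end, where the paper derives $\sum_{l=1}^{i}k_l=e-i$ and uses it, while you substitute $k_i=s_1+e-s_i-1$ directly; both are equivalent bookkeeping.
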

\begin{proof}
To obtain $l_{s_1}$ we need to compute the row and column in $[\lambda]$ corresponding to $s_1$ in its abacus display. Position $s_1$ is the $(m+1)$-th empty space in the abacus, so it corresponds to column $m+1$ in the Young diagram. Moreover, the number of beads before $s_1$ is $s_1-m$. Thus, the row we are looking for is $r+m-s_1$, and $l_{s_1}$ follows. The rest is straight forward following the definition of $s_j$ in terms of $s_1$ and $k_l$ in the formulation, and the fact that  $l_{s_j}=l_{s_{j-1}}-(k_{j-1}+1)i+e$. For the expression of $l_{s_i}$ observe
$$\sum_{l=1}^{i}k_l=s_2-s_1-1+s_3-s_2-1+\cdots+s_i-s_{i-1}-1+s_1+e-s_i-1=e-i.$$ 
We use such identity to obtain the expression for $l_{s_i}-k_ii=e(m+i-1)+(r-s_i-k_i-1)i+1=e(m+i-1)+(r-s_1-i-\sum_{l=1}^ik_l)i+1$.
\end{proof}

\begin{rem} Observe that we can encode all information about the ladders in Lemma~\ref{ladders} by computing $l_{s_1}$ and the ordered tuple of $-i$ and $+e-i$ given by the consecutive $e-1$ entries in the abacus between $s_1$ and $b_1=s_1+e$.
\label{rema}
\end{rem}
\vspace{4mm}

\begin{prop} Let $\lambda\vdash{n}$. Then $\phi(\lambda)\in\mathfrak{R}_{(e,i)}(\lambda)$.
\label{main}
\end{prop}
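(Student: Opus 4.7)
The aim is to prove that $\phi(\lambda)\in\mathfrak{R}_{(e,i)}(\lambda)$. From the characterisation of regularisation classes in Section~3, it suffices to show that the multisets of $(e,i)$-ladders (with residues) of the nodes in $[\lambda]\setminus[\phi(\lambda)]$ and of those in $[\phi(\lambda)]\setminus[\lambda]$ coincide.

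First I would decompose the operation $\phi$ into a sequence of $c$ elementary bead-swaps: the $k$-th swap moves a bead up by $e$ (from the current occupant of position $b_k$) and a bead down by $e$ (into the current empty position $t_k$). Although the construction describes these as simultaneous moves with possible cascades, sequentialising is harmless because every cascade is itself a chain of single bead-by-$e$ shifts that commute. Each elementary up-move removes the $e$ nodes of a skew $e$-hook and each elementary down-move adds such a hook; by Lemma~\ref{ladders} and Remark~\ref{rema}, the $e$ ladder labels involved in a single such move are encoded explicitly by the base ladder $l_{p-e}$ attached to the move's lower endpoint $p-e$, together with a running sum of increments $+(e-i)$ or $-i$ read off the beads and gaps in the $e-1$ intermediate positions of the abacus.

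The core step is then to match the total multiset of $ec$ removed labels with the total multiset of $ec$ added labels. Rather than attempting a naive pair-by-pair matching between individual up-moves and down-moves, I would compare the two collections in bulk: the abacus windows from $b_k-e$ to $b_k$ for $k=1,\ldots,c$ and the windows from $t_k-e$ to $t_k$ for $k=1,\ldots,c$ together cover the same set of \emph{active} positions of the critical region of the abacus, up to a reshuffling controlled by the interleaving of the $s_j$, $t_k$ and $b_k$. The base-ladder formula of Lemma~\ref{ladders} is linear in the position and in the count of preceding gaps, so the sums $\sum_k l_{b_k-e}$ and $\sum_k l_{t_k-e}$ differ by a quantity that is absorbed exactly by the reshuffle of intermediate $\pm$-increments. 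The minimality of $c$ (Lemma~\ref{phipsi}) and the maximality of $s_1$ are essential here to rule out stray beads or empty positions outside the window contributing unaccounted labels, and Remark~\ref{abres} handles the residues uniformly across each swap.

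The main obstacle is carrying out this bulk matching cleanly: exhibiting the correct bijection of abacus positions and verifying that every contribution, including residues, balances. The argument is finite combinatorics on the abacus, but it is delicate because the windows for different $k$ interact non-trivially with the positions $s_1,\ldots,s_i$, with each other, and with the beads $b_1,\ldots,b_c$ themselves, so one must attribute each abacus position to the correct move and count it exactly once. Once this is established for the aggregate of the $c$ swaps, the equality of ladder multisets follows, and with it $\phi(\lambda)\in\mathfrak{R}_{(e,i)}(\lambda)$.
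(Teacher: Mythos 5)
Your reduction of the statement to the equality of the multisets of $(e,i)$-ladder labels (with residues) of $[\lambda]\setminus[\phi(\lambda)]$ and $[\phi(\lambda)]\setminus[\lambda]$ is the right first move, and your use of Lemma~\ref{ladders} and Remark~\ref{rema} to encode each skew $e$-hook by a base ladder plus a word of $+(e-i)$ and $-i$ increments is exactly the right bookkeeping. But the core of the proof is missing, and the mechanism you sketch for the ``bulk matching'' would not deliver it. First, comparing the sums $\sum_k l_{b_k-e}$ and $\sum_k l_{t_k-e}$ (even after correcting by the increments) can only show that the two multisets of $ec$ ladder labels have equal total; what is needed is equality of the multisets themselves, which is strictly stronger. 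Second, the claim that the windows $[b_k-e,b_k]$ and $[t_k-e,t_k]$ ``cover the same set of active positions up to a reshuffling'' is false: in the paper's own example with $(e,i)=(7,4)$ one has $(b_1,\ldots,b_4)=(7,10,11,13)$ and $(t_1,\ldots,t_4)=(12,15,16,19)$, so the up-windows cover positions $0$ through $13$ while the down-windows cover $5$ through $19$. You acknowledge that ``exhibiting the correct bijection \ldots and verifying that every contribution \ldots balances'' is the main obstacle --- but that obstacle \emph{is} the proposition, so as it stands the proposal is a plan rather than a proof.

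The missing idea is the pairing and the induction that make the computation local. One does not match all $c$ up-moves against all $c$ down-moves at once: one peels off the single skew $e$-hook at $s_1/b_1$ to form $\xi\vdash n-e$, applies induction on $n$ to get $\phi(\xi)\in\mathfrak{R}_{(e,i)}(\xi)$ (using Lemma~\ref{phipsi} to see that $\phi(\xi)$ uses exactly the beads $b_2,\ldots,b_c$ and gaps $t_1,\ldots,t_{c-1}$, so that $\phi(\lambda)$ is $\phi(\xi)$ with one further bead moved from $t_c-e$ to $t_c$), and then matches the \emph{one} hook removed at $s_1$ against the \emph{one} hook added at $t_c$. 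That single comparison is where the increment word earns its keep: writing $t_c-e=s_1+ue+v$, the bead/gap pattern read from $t_c-e$ to $t_c$ in $\phi(\lambda)$ is a cyclic shift by $v$ of the pattern from $s_1$ to $b_1$ in $\lambda$ with one $-i$ replaced by $+(e-i)$, and an explicit count of the empty positions preceding $t_c-e$ shows $l_{t_c-e}$ equals the ladder of position $s_1+v$, whence the two ladder tuples coincide term by term (residues included, since corresponding positions lie on the same runners). Without this pairing and the explicit cyclic-shift computation, the multiset equality is asserted rather than proved.
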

\begin{proof} We proceed by induction on $n$, and for fixed $n$ by induction on lexicographic order. Assuming $c>1$, we build $\xi$  as the partition obtained from the abacus display of $\lambda$ by moving a bead from position $b_1=s_1+e$ to $s_1$. We can see that the bead $b_2$ defines a $(e,i)$-hook in $\xi$: since $c>1$, $t_1>b_2$ and every space between $s_2=b_2-e$ and $b_2$ not in one of the runners $x_z$ has a bead on it, and every space on runners $x_z$ must lie empty. Moreover, by the choice of $s_1$, $s_2$ is maximal with that property in $\xi$. For the construction of $\phi(\xi)$, we need $c'$ minimal such that $t_{c'}<b_{c'+2}$. Note that by Lemma~\ref{phipsi}, this last condition is equivalent to $t_{c'+1}<b_{c'+2}$. From the construction $\xi$ and $\lambda$ are equal after $b_1$, so $c'=c-1$, and we form $\phi(\xi)$ by moving all beads $b_2,\ldots,b_c$ upwards and beads from positions $t_1-e,\ldots,t_{c-1}-e$ downwards in their runners. Thus, one obtains $\phi(\lambda)$ from $\phi(\xi)$ by moving a bead down in its runner from $t_c-e$ to $t_c$. The construction of $\phi(\xi)$ ensures that $t_c-e$ is occupied. Let $j\leq{i}$ be such that $t_c$ lies between runners $x_j$ and $x_{j+1}$  (we set $x_{j+1}:=x_1$ if $j=i$).

\begin{center}
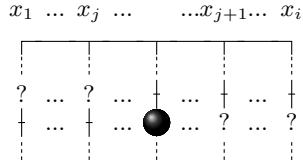

\abacus(e{x_1}e{...}e{x_j}e{...}e{ }e{...}e{x_{j+1}}e{...}e{x_i},ldmdmdmdr,ve{ }ve{ }ve{ }ve{ }v,e{?}e{...}e{?}e{...}ne{...}ne{...}n,ne{...}ne{...}be{...}e{?}e{...}e{?},ve{ }ve{ }ve{ }ve{ }v)
\captionof{figure}{abacus for $\phi(\lambda)$ showing a bead at position $t_c$}
\end{center}

From Lemma~\ref{ladders}, if $m$ is the number of empty spaces before $s_1$, the $(e,i)$-ladders removed to obtain $\xi$ from $\lambda$ are those given by equations~(\ref{setlad}): $l_{s_1},\ldots,l_{s_i}-k_ii$. \newline
We want to show the nodes in $\phi(\lambda)\backslash\phi(\xi)$ lie in the same $(e,i)$-ladders as those in $\lambda\backslash\xi$. We compute the number of empty spaces before $t_c-e:=s_1+ue+v$, $0<v<e$ in $\phi(\lambda)$. This is $ui+j-c$ positions in the runners $x_l$ minus the $c$ beads $b_1,\ldots,b_c$ added to the $c-1$ $t_l$'s. Thus, in total we have $m+ui+j-1$ empty spaces. By the construction of $\phi(\xi)$ we know that position $s_1+(u+1)e=b_l$ for a certain $l<c$ must lie empty in $\phi(\lambda)$ and we know that the spaces in runners $x_v$ lie empty and all the other will be occupied. This means that the sequence of $+(e-i)'s$ and $-i's$ mentioned in Remark~\ref{rema} will be the same subject to a cyclic shift (depending on $v$) replacing the $-i$ in the $v$-th element in that sequence for the rim $\lambda\backslash\xi$ by a $+e-i$ in the rim $\phi(\lambda)\backslash\phi(\xi)$. That is, we will have the same ordered tuple with a shift by $v$ and one of the $-i$'s (the one corresponding to $x_1$ in the tuple for $\phi(\lambda)$ substituted by a $+e-i$).\newline
Therefore, in the abacus display of $\phi(\lambda)$ there are $(\sum_{l=1}^{j}k_l)+j-v-1$ consecutive beads between the runner of $t_c$ and $x_{j+1}$. Then we have the same sequence of beads and spaces between $x_{j+1}$ and $x_j$ reading horizontally and left to right as there is in those runners between $s_1$ and $b_1$. After $x_{j+1}$ we have $v-j-\sum_{l=1}^{j-1}k_l$ beads. Using the information from the previous paragraph, the ladder associated with $t_c-e$ is $l_{t_c-e}=e(m+ui+j-1)+(r-s_1-ue-v-1)i+1=e(m+j-1)+(r-s_1-v-1)i+1$. Now, observe this is precisely the ladder associated with position $s_1+v$ in $\lambda$, and since both $t_c-e$ and $s_1+v$ belong to the same runner, they have the same $e$-residue. At runner $x_{j+1}$ in $\phi(\lambda)$ we have the ladder $l_{t_c-e}-(j-v-1+\sum_{l=1}^jk_l)i+(e-i)=e(m+j)+(r-s_1-j-\sum_{l=1}^jk_l-1)i+1=e(m+j)+(r-s_{j+1}-1)i+1=l_{s_{j+1}}$, which again is the same ladder as the one corresponding $s_{j+1}$ in $\lambda$ and therefore have the same $e$-residue. Thus, the ladders for the nodes in $\phi(\lambda)\backslash\phi(\mu)$ are:
\begin{align}
& l_{t_c-e},\quad l_{t_c-e}-i,\ldots,l_{t_c-e}-(j-v-1+\sum_{l=1}^jk_l)i,\notag\\
& l_{s_{j+1}},\quad l_{s_{j+1}}-i,\ldots,l_{s_{j+1}}-k_{j+1}i,\notag\\
&\qquad \vdots \notag\\
& l_{s_i},\quad l_{s_i}-i,\ldots,l_{s_i}-k_ii,\notag\\
& l_{s_1},\quad l_{s_1}-i,\ldots,l_{s_1}-k_1i,\notag\\
&\qquad \vdots \notag\\
& l_{s_j},\quad l_{s_j}-i,\ldots l_{s_j}-(v-j-\sum_{l=1}^{j-1}k_l)i\notag \\
\notag
\end{align}
Now, rearranging by placing the ladder $l_{s_1}$ first and counting all consecutive ladder appearing, and placing the first ladder after the last one (that is, $l_{s_j}-(v-j-\sum_{l=1}^{j-1}k_l)i=e(m+j-1)+(r-s_1-v)i+1=l_{t_c}-i$ followed by $l_{t_c}$), it is clear that the nodes removed to go from $\lambda$ to $\xi$ lie in the same $(e,i)$-ladders than those removed going from $\phi(\lambda)$ to $\phi(\xi)$. Since we also have that the nodes with the same ladder numbering in either $\lambda$ and $\phi(\lambda)$, lie in the same runner of their corresponding abacus displays, they also have the same $e$-residue.\newline
If $c=1$ then we replace  $\phi(\xi)$ by $\xi$ in the argument above.
\end{proof}

\vspace{8mm}

\section{Properties of $(e,i)$-regular partitions}

In this last section we use the work in sections 3 and 4 to determine that there exist a unique most dominant partition in each regularisation class and characterise $(e,i)$-regular partitions in terms of their Young diagram and abacus display.

\begin{cor} Let $\lambda\vdash{n}$. Then $\lambda^{reg_{e,i}}$ is the most dominant partition in its $(e,i)$-regularisation class. 
\label{onlylocked}
\end{cor}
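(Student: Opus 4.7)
I would show that $\mathfrak{R}_{(e,i)}(\lambda)$ admits a unique dominance-maximum and then identify this maximum with $\lambda^{reg_{e,i}}$. The hinge is the fact, essentially contained in Proposition~\ref{moredom}, that any partition carrying an $(e,i)$-hook is not dominance-maximal in its regularisation class: the algorithm $\phi$ produces a strictly more dominant member of the same class.

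First, since $\mathfrak{R}_{(e,i)}(\lambda)$ is finite I would fix any dominance-maximal element $\nu^*$ in it. If $\nu^*$ contained an $(e,i)$-hook, Proposition~\ref{moredom} applied to $\nu^*$ would yield $\nu'\in\mathfrak{R}_{(e,i)}(\nu^*)=\mathfrak{R}_{(e,i)}(\lambda)$ with $\nu'\triangleright\nu^*$, contradicting maximality. Hence $\nu^*$ contains no $(e,i)$-hook.

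Second, uniqueness of hook-free elements: if $\nu_1,\nu_2\in\mathfrak{R}_{(e,i)}(\lambda)$ both lack $(e,i)$-hooks and $\nu_1\neq\nu_2$, Lemma~\ref{uniqueness} would force one of them to carry an $(e,i)$-hook. Hence the hook-free partition in the class is unique, and every dominance-maximum coincides with this $\nu^*$.

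Third, I would observe that $\lambda^{reg_{e,i}}$ is itself hook-free: an $(e,i)$-hook in $\lambda^{reg_{e,i}}$ would, via Proposition~\ref{moredom}, yield a strictly more dominant member of $\mathfrak{R}_{(e,i)}(\lambda^{reg_{e,i}})=\mathfrak{R}_{(e,i)}(\lambda)$ obtained by moving further nodes upwards in their ladders, contradicting the ``as high as possible'' clause in the definition of $\lambda^{reg_{e,i}}$. By the uniqueness step, $\lambda^{reg_{e,i}}=\nu^*$, so $\lambda^{reg_{e,i}}$ is the most dominant partition in its class.

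The only delicate point is the third step, where one must read the ``as high as possible'' condition in the definition of $\lambda^{reg_{e,i}}$ as ruling out the existence of a strictly more dominant partition in the same class; once that is accepted, the proof reduces to a routine finite-poset argument combined with Lemma~\ref{uniqueness} and Proposition~\ref{moredom}, all the genuine combinatorial work having been carried out in Section~4.
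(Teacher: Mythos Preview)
Your proposal is correct and follows essentially the same approach as the paper: combine Lemma~\ref{uniqueness} (at most one hook-free partition per class) with Proposition~\ref{moredom} (a hook yields a strictly more dominant class member) to conclude that the unique hook-free partition is the dominance maximum, and then identify it with $\lambda^{reg_{e,i}}$. Your version is somewhat more explicit than the paper's, which simply asserts the final identification; in particular, you spell out the finite-poset argument and isolate the one genuinely delicate point---namely that the ``as high as possible'' clause in the definition of $\lambda^{reg_{e,i}}$ must be read as precluding the existence of a strictly more dominant class member obtained by moving nodes upward---whereas the paper leaves this implicit.
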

\begin{proof} We have from Lemma~\ref{uniqueness} that in each $(e,i)$-regularisation class there is at most a single partition with no $(e,i)$-hooks and the algorithm proved in Section 4 shows that if we have a partition $\lambda$ in the class with an $(e,i)$-hook, we can build another partition $\phi(\lambda)$ in the class which is more dominant. It follows that $\lambda^{reg_{e,i}}$ is the unique $(e,i)$-regular partition in the class and that it must dominate every other partition contained in the the $(e,i)$-regularisation class.



\end{proof}
\vspace{4mm}

We can now prove  an important characterisation of $(e,i)$-regular partitions:

\begin{prop} A partition $\lambda$ is $(e,i)$-regular if and only if it does not have $(e,i)$-hooks.
\label{charreg}
\end{prop}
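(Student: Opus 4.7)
The plan is to deduce this equivalence directly from the apparatus built in Sections 3 and 4, specifically Lemma~\ref{uniqueness}, Proposition~\ref{moredom}, and Corollary~\ref{onlylocked}.

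For the forward implication, I would argue by contrapositive. Assume $\lambda$ contains an $(e,i)$-hook. Proposition~\ref{moredom} then produces $\mu \in \mathfrak{R}_{(e,i)}(\lambda)$ with $\mu \unrhd \lambda$, and inspecting its proof (where $\phi(\lambda) > \lambda$ in lexicographic order and hence $\phi(\lambda) \neq \lambda$) together with Lemma~\ref{dom} shows that in fact $\mu \triangleright \lambda$ strictly. Hence $\lambda$ fails to be the most dominant partition of its class. By Corollary~\ref{onlylocked}, the most dominant element of the class is $\lambda^{reg_{e,i}}$, so $\lambda \neq \lambda^{reg_{e,i}}$, i.e., $\lambda$ is $(e,i)$-singular.

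For the reverse implication, suppose $\lambda$ has no $(e,i)$-hooks. I consider $\lambda^{reg_{e,i}}$, which lies in the same $(e,i)$-regularisation class as $\lambda$. If these two partitions were distinct, then Lemma~\ref{uniqueness} would force one of them to contain an $(e,i)$-hook; since $\lambda$ does not, the hook would have to reside in $\lambda^{reg_{e,i}}$. But then Proposition~\ref{moredom} applied to $\lambda^{reg_{e,i}}$ would yield a partition in $\mathfrak{R}_{(e,i)}(\lambda^{reg_{e,i}}) = \mathfrak{R}_{(e,i)}(\lambda)$ strictly dominating $\lambda^{reg_{e,i}}$, contradicting the maximality established in Corollary~\ref{onlylocked}. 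Thus $\lambda = \lambda^{reg_{e,i}}$, and $\lambda$ is $(e,i)$-regular.

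I do not anticipate any serious technical obstacle: the statement is essentially a clean packaging of the three prior results. The only point requiring mild care is confirming that the partition supplied by Proposition~\ref{moredom} is \emph{strictly} more dominant than $\lambda$, rather than merely $\unrhd$-comparable; but this is immediate from the observation inside that proposition's proof that $\phi(\lambda) > \lambda$ in lexicographic order, so $\phi(\lambda) \neq \lambda$, and Lemma~\ref{dom} upgrades $\unrhd$ to $\triangleright$.
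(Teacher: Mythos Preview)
Your argument is correct. Both directions go through cleanly once Corollary~\ref{onlylocked} is in hand, and you are right that the strictness $\phi(\lambda)\triangleright\lambda$ follows from the lexicographic inequality noted just after the construction of $\phi$ together with Lemma~\ref{dom}.

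The route, however, differs from the paper's. For the implication ``no $(e,i)$-hooks $\Rightarrow$ $(e,i)$-regular'', the paper does not invoke Corollary~\ref{onlylocked} or Proposition~\ref{moredom} at all: it argues directly in the Young diagram, taking the lowest-leftmost movable node $(a,b)$ and comparing $|\mathcal{L}_{<a}(a,b)|$ with $|\mathcal{L}_{<a}(a,b-1)|$ to exhibit an explicit $(e,i)$-hook with foot $(a,b)$. For the other implication, the paper again works concretely, showing that the foot of any $(e,i)$-hook is an unlocked node, whence $\lambda$ is singular by definition. Your proof, by contrast, is entirely structural: it treats Lemma~\ref{uniqueness}, Proposition~\ref{moredom} and Corollary~\ref{onlylocked} as black boxes and never touches an individual node. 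The advantage of your approach is economy---the result drops out in a few lines from the existing machinery. The advantage of the paper's approach is that the first direction is self-contained and independent of the abacus algorithm of Section~4, giving a more elementary combinatorial explanation of \emph{why} hooklessness forces regularity.
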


\begin{proof}
First we show that if $\lambda$ does not have a $(e,i)$-hook, then it is $(e,i)$-regular and we will proceed by contradiction.  Assume $\lambda$ is $(e,i)$-singular, so there is at least one node which can be moved up in its $(e,i)$-ladder such that the resulting configuration is the Young diagram of a partition. Consider the lowest and leftmost such node $x=(a,b)\in\lambda$. In particular, we have $(a+1,b)\not\in\lambda$ and, if $b>1$, $y=(a,b-1)\in\lambda$ can not be moved up in it's $(e,i)$-ladder.  Let $\mathcal{L}_{<a}(x)$ and $\mathcal{L}_{<a}(y)$ be, respectively, the subsets of the ladders of $x$ and $y$ in $\lambda$ lying in a row $<a$.\newline
Observe that we have $|\mathcal{L}_{<a}(x)|\leq |\mathcal{L}_{<a}(y)|$. Now, if $|\mathcal{L}_{<a}(x)|< |\mathcal{L}_{<a}(y)|$, let $(a-k(e-i),b+ki-1)\in\lambda$ with $(a-k(e-i),b+ki)\not\in\lambda$. Then, there is an $(e,i)$-hook with foot $(a,b)$ and hand $(a-k(e-i),b+ki-1)$. Thus, $|\mathcal{L}_{<a}(x)|=|\mathcal{L}_{<a}(y)|$, but this means that if $(a,b)$, $b>1$ moves up, also $(a,b-1)$ moves up, which contradicts our choice of $x=(a,b)$. If $b=1$, this implies that $(a,1)$ can not be moved up and contradicts the choice of $(a,1)$.

\vspace{4mm}

Now, assume there is a $(e,i)$-hook at node $(a,b)\in\lambda$. Then the lowest node in column $b$ is unlocked, since it has an empty rung in its ladder above $(a,b)$ such that the rung immediately to its left is occupied. 
Thus, by Corollary~\ref{onlylocked}, $\lambda$ is $(e,i)$-singular.


\end{proof}

\begin{rem} Note that from Proposition~\ref{charreg} and Lemma~\ref{abacus1} we obtain a characterisation of a $(e,i)$-regular partition via its abacus display.

\end{rem}

\vspace{4mm}

Using the previous lemma, we can obtain a necessary condition for a partition to be $(e,i)$-regular:

\begin{lemma} If $\lambda\vdash{n}$ is $(e,i)$-regular, then there are no rungs $A$, $B$ and $C$ in the same $(e,i)$-ladder with $A$ in a row above $B$, and $B$ above $C$, such that $B\in\lambda$ and $A,C\not\in\lambda$.
\label{ABC}
\end{lemma}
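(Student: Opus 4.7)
The plan is a proof by contradiction. Assume $\lambda$ is $(e,i)$-regular yet rungs $A$, $B$, $C$ exist in a common $(e,i)$-ladder $\mathcal{L}$ with $A$ above $B$ above $C$, $B\in\lambda$, and $A,C\notin\lambda$. By Proposition~\ref{charreg}, $\lambda$ has no $(e,i)$-hook, so the aim is to derive the existence of such a hook from the ABC pattern, obtaining a contradiction.

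The first step I would take is a normalisation of $B$. Replacing $B$ with the topmost rung of $\lambda\cap\mathcal{L}$, I may assume $A = (a-(e-i), b+i)$ is the rung immediately above $B = (a, b)$, so that $\lambda_{a-(e-i)} \leq b+i-1$ while $\lambda_a\geq b$. Let $L$ be the size of the maximal initial block of $\lambda\cap\mathcal{L}$ starting at $B$ and descending through the ladder. Invoking Lemma~\ref{samelad} together with the observation that any $(Le, Li)$-hook of $\lambda$ is in particular an $(e,i)$-hook, I pass to the $(Le, Li)$-ladder structure and reduce to the case $L = 1$; in this reduced situation $B$ is isolated in $\lambda\cap\mathcal{L}$ and, additionally, $\lambda_{a+(e-i)} \leq b-i-1$.

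Next I would exhibit the $(e,i)$-hook. Let $F = (\lambda'_b, b)$ be the foot of column $b$; since $\lambda_a\geq b$ and $\lambda_{a+(e-i)} < b$, we have $\lambda'_b = a+s$ for some $0 \leq s \leq e-i-1$. I then seek $k\geq 1$ such that $\lambda_{a+s-k(e-i)} = b+ki-1$, so that $F$ and the rim node $(a+s-k(e-i), b+ki-1)$ form the foot and hand of an $(e,i)$-hook of hook length $ke$ and leg length $k(e-i)$. The sequence $(\lambda_{a-k(e-i)})_{k\geq 0}$ is weakly increasing, starts at $\lambda_a\geq b$, and is bounded above by $b+ki-1$ by the topmost property of $B$; tracking this sequence against its ladder envelope, while possibly shifting to the left-adjacent ladder containing $(a, b-1)$ to adjust the column, produces the required $k$.

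The main obstacle is this final step: guaranteeing the hand lands exactly at the upper bound $b+ki-1$ for some choice of $k$ and column demands a delicate case analysis. I expect the cleanest route to be via the abacus characterisation from the remark following Proposition~\ref{charreg}. The isolated ABC configuration translates to a specific bead-and-empty pattern on the runner of the $e$-abacus corresponding to $\mathcal{L}$, from which one reads off directly a bead at some position $p$ with an empty at $p-ke$ and exactly $k(e-i)$ intermediate beads, immediately realising the $(e,i)$-hook condition of Lemma~\ref{abacus1}.
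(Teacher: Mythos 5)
Your overall strategy (assume regularity, invoke Proposition~\ref{charreg} to conclude there are no $(e,i)$-hooks, and then derive a hook from the ABC pattern) is the same in spirit as the paper's argument, but the execution has two genuine gaps. First, the normalisation is unsound: replacing $B$ by the topmost rung of $\lambda\cap\mathcal{L}$ can discard the hypothesis you actually need, namely that an empty rung above $B$ exists \emph{inside} $\N^2$. After your normalisation (and again after the $(Le,Li)$ rescaling, which multiplies the vertical step by $L$) all you retain is: $B=(a,b)\in\lambda$, every rung above $B$ empty or nonexistent, and the rung immediately below empty. That data is consistent with $(e,i)$-regularity: take $\lambda=(3)$ and $(e,i)=(3,2)$, where $(1,3)$ is occupied, $(2,1)$ is an empty rung below it, no rung above $(1,3)$ lies in $\N^2$, and $\lambda$ has no $(3,2)$-hook. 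Hence no argument using only your reduced data can produce the hook; the existence in $\N^2$ of an empty rung strictly above an occupied one must be preserved (e.g.\ by taking $B$ to be the topmost occupied rung strictly below the given $A$), and the rescaling step must be rechecked against this. The claim that $B$ becomes ``isolated in $\lambda\cap\mathcal{L}$'' after rescaling is also unjustified, though you do not appear to need it.

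Second, and more importantly, the heart of the lemma --- actually exhibiting the $(e,i)$-hook --- is never carried out. You set up the correct target ($\lambda_{a+s-k(e-i)}=b+ki-1$ for some $k$, with foot $(\lambda_b',b)$) and then defer the ``delicate case analysis'' to a hoped-for abacus shortcut, but that shortcut is not available as described: a single $(e,i)$-ladder does not correspond to a runner of the $e$-abacus. The residue/runner correspondence of Remark~\ref{abres} concerns positions along the rim, whereas $A$ and $C$ are not nodes of $\lambda$ at all and $B$ need not lie on the rim, so there is no bead-and-gap pattern on one runner that can simply be ``read off'' and fed into Lemma~\ref{abacus1}. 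The missing content is exactly what the paper supplies on the Young diagram: choose $A$ and $C$ to be the empty rungs nearest to $B$, then propagate membership in $\lambda$ column by column via increasing offsets $h_1\le h_2\le\cdots\le h_{i+1}\le e-i$, using at each of the $i+1$ steps that a failure of propagation would create an $(e,i)$-hook, until one concludes $A\in\lambda$, a contradiction. Without an argument of this kind (or a genuinely worked-out abacus translation of the whole ladder configuration), the proposal does not prove the lemma.
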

\begin{proof} If $i=1$ this is straightforward. For $i>1$, let $\lambda$ be a partition with such three rungs in the same $(e,i)$-ladder and assume also that $\lambda$ is $(e,i)$-regular. Then, in the diagram of $\lambda$ there are no $(e,i)$-hooks. Denote the rungs by $B=(a,b)$, $A=(a-k_A(e-i),b+k_Ai)$ and $C=(a+k_C(e-i),b-k_Ci)$ with $k_A,k_C>0$. Among all the empty rungs above and below $B$ in its $(e,i)$-ladder let $A$ and $C$ be those closer to $B$. That is, $k_A$ and $k_C$ are minimal such that $A$, $C\not\in{\lambda}$.\newline
Let $1\leq{h_1}\leq{e-i}$ be minimum such that $(a+k_C(e-i)-h_1,b-k_Ci)\in\lambda$. By our hypothesis, for all $k<k_A$, $(a-k(e-i)-h_1,b+ki)\in\lambda$ and for all $k\leq{k_C}$, $(a+k(e-i)-h_1,b-ki)\in\lambda$. Now let $h_1\leq{h_2}\leq{e-i}$ minimum such that $(a+k_C(e-i)-h_2,b-k_Ci+1)\in\lambda$. Again, since by our hypothesis there are no $(e,i)$-hooks in $\lambda$, we must have $(a+k(e-i)-h_2,b-ki+1)\in\lambda$ for all $k\leq{k_C}$ and $(a-k(e-i)-h_2,b+ki+1)\in\lambda$ for all $k<k_A$.\newline
Continuing with this (finite) process we arrive to the following scenario: consider  $(a+k_C(e-i)-h_{i+1},b-k_Ci+i)\in\lambda$. Observe that since $k_C$ was taken to be minimal, we have $(a+(k_C-1)(e-i),b-(k_C-1)i)\in\lambda$ and so $a+k_C(e-i)-h_{i+1}\geq{a+(k_C-1)(e-i)}$. Then by our  hypothesis, $(a-(k_A-1)(e-i)-h_{i+1},b+(k_A-1)i+i)\in\lambda$. Again, $a-(k_A-1)(e-i)-h_{i+1}\geq{a-k_A(e-i)}$ since $0<h_{i+1}\leq{e-i}$. But this would imply $A=(a-k_A(e-i),b+k_Ai)\in\lambda$ and this contradicts our initial hypothesis of $A\not\in\lambda$. Thus, $\lambda$ is $(e,i)$-singular. The following diagram represents the construction described in the proof, with the lines passing through rungs in the same $(e,i)$-ladder.

\begin{center}
\begin{tikzpicture}[scale=1]
\tgyoung(0cm,0cm,::::::::::::::<A>,::::::::::::::<\bigcirc>,::::::::::::::\vdts,:::::::::::::;,,::::::::::;,:::::::::;,::::::::::\vdts,:::::::::;,,,:::::::;<B>,,,::::;,:::::\vdts,::::;,:::::\vdts,::::;,,:;,;,:\vdts,:<\bigcirc>,:<C>)
\draw(0.1,-10.5)--(6.4,0);
\draw(0.1,-9.6)--(6.4,0.9);
\draw (1.9,-8.25)--(6.4,-0.75);

\end{tikzpicture}
\end{center}
\end{proof}
\vspace{8mm}

{\Large\textbf{Conclusion and future work}}
\vspace{4mm}

In these notes we have described a family of maps on integer partitions generalising James's $e$-regularisation. For fixed $e$ and $0<i<e$, these maps split the set of partitions for a given integer $n$ in equivalence classes, that we call $(e,i)$-regularisation classes. In Section $3$, we characterised $(e,i)$-regular partitions both via their Young diagram and their abacus display in terms of $(e,i)$-hooks (Lemma~\ref{uniqueness}  and Lemma~\ref{abacus1}, respectively). In Section $4$, we prove an algorithm to construct the $(e,i)$-regularisation of a partition in the abacus, and use it to show in Section $5$ that there is a unique $(e,i)$-regular partition in each class and it dominates every other partition in the class.

The motivation for this work is the study of the extension of James's result on $e$-regularisations to $(e,i)$-regularisations, namely that for a given $\lambda\vdash{n}$, the simple module labelled by $\lambda^{reg_{e,i}}$ when such partition is $e$-regular is a composition factor of $S^{\lambda}$ with multiplicity one in $\He_{\F,q}(\mathfrak{S}_n)$. This will be the matter of [MB], but in light of other uses of these maps already found by Fayers in [F2], we consider the material contained here of interest to be published independently.\newline
In discussions with Alex Fink at Queen Mary University of London, it was suggested to the author that the composition of our $(e,i)$-regularisation with James's one (the $(e,1)$-regularisation), which ensures obtaining a $e$-regular partition, might deliver non-zero composition factors. We have corroborated this conjecture only experimentally with the computational help of GAP, and it remains a question to be explored further.
\vspace{8mm}

{\Large\textbf{Acknowledgements}}
\vspace{4mm}

The author would like to thank Dr Matthew Fayers, his PhD supervisor, for his guidance, patience and support throughout the course of this research, without which this work would not have been possible. 
\vspace{8mm}

{\Large\textbf{References}}
\vspace{4mm}

\begin{tabular}{p{0.75cm}p{13.35cm}}


[DY] & P. Dimakis, G. Yue, \textit{Combinatorial wall-crossing and the Mullineux involution}, J Algebr Comb 50 (2019), 49--72.\\

& \\

[F1] & M. Fayers, \textit{Regularising a partition on the abacus}, 2009, unpublished. Retrieved from \textless http://www.maths.qmul.ac.uk/$\sim$mf/papers/abreg.pdf \textgreater\\

&\\

[F2] & M. Fayers, \textit{Regularisation, crystals and the Mullineux map}, 2021, preprint.\\

&\\

[J1] & G.D. James, \textit{On the decomposition matrices of the symmetric group II}, J. Algebra \text{43}, (1976), 45--54.\\

&\\

[J2] & G.D. James, \textit{The representation theory of the symmetric group}, Lecture Notes in Mathematics, Springer, 1978.\\

&\\

[Ma] & A. Mathas, \textit{Iwahori--Hecke algebras and Schur algebras of the symmetric group}, University lecture series 15, American Mathematical Society, Providence, RI, 1999.\\

&\\

[MB] & D. Millan Berdasco, \textit{A new family of decomposition numbers of Iwahori--Hecke algebras}, in preparation.\\
\end{tabular}
\end{document}